\numberwithin{equation}{section}
\newtheorem{theo}{Theorem}[section]
\newtheorem{prop}[theo]{Proposition}
\newtheorem{lemme}[theo]{Lemma}
\newtheorem{corol}[theo]{Corollary}
\newtheorem{claim}{Claim}
\theoremstyle{remark}
\newcommand{\be}{\begin{equation*}}
\newcommand{\ee}{\end{equation*}}
\newcommand{\ben}{\begin{equation}}
\newcommand{\een}{\end{equation}}
\newcommand{\begincal}{\begin{eqnarray*}}
\newcommand{\fincal}{\end{eqnarray*}}
\newcommand{\bal}{\begin{aligned}}
\newcommand{\eal}{\end{aligned}}
\newcommand{\ua}{u_\alpha}
\newcommand{\Lax}{\Lambda_{\xi}}
\newcommand{\mk}{\mu_k}
\newcommand{\xk}{\xi_k}
\newcommand{\exk}{\textrm{exp}_{\xi_k}^{g_{\xi_k}}}
\newcommand{\vek}{\varepsilon_k}
\newcommand{\RR}{\mathbb{R}}
\newcommand{\ve}{\varepsilon}
\newcommand{\vp}{\varphi}
\numberwithin{equation}{section}
\begin{document}

\title[Non-compactness of the Einstein-Lichnerowicz equation]{Non-compactness and Infinite number of conformal initial data sets in high dimensions} 

\author{Bruno Premoselli \and Juncheng Wei}

\thanks{J. Wei was supported by NSERC of Canada.} 
 
\address{Bruno Premoselli, Laboratoire AGM, Universit\'e de Cergy-Pontoise site Saint-Martin,
2 avenue Adolphe Chauvin 
95302 Cergy-Pontoise cedex 
France. }
\email{bruno.premoselli@u-cergy.fr \\ }

\address{Juncheng Wei, Department of Mathematics, University of British Columbia
                   Vancouver V6T 1Z2, Canada.}
\email{jcwei@math.ubc.ca}

\begin{abstract} 
On any closed Riemannian manifold of dimension greater than $7$, we construct examples of background physical coefficients for which the Einstein-Lichnerowicz equation possesses a non-compact set of positive solutions. This yields in particular the existence of an infinite number of positive solutions in such cases. 
\end{abstract}

\maketitle  
\section{Introduction}

\noindent Let $(M,g)$ be a closed compact Riemannian manifold of dimension $n \ge 6$. We investigate in this work non-compactness issues for the set of positive solutions of the Einstein-Lichnerowicz equation in $M$, which writes as follows:
\ben \label{intro1}
 \triangle_g u + h u = f u^{2^*-1} + \frac{a}{u^{2^*+1}}.
 \een 
Here $h,f,a$ are given functions in $M$ such that $\triangle_g + h$ is coercive, $f > 0$ and $a \ge 0$, $a \not \equiv 0$ and $2^* = \frac{2n}{n-2}$ is the critical exponent for the embedding of the Sobolev space $H^1(M)$ into Lebesgue spaces. Equation \eqref{intro1} arises in the initial-value problem in General Relativity, when one looks for initial-data sets for the Einstein equations \emph{via} the conformal method. The determination of constant-mean-curvature initial data sets amounts to the resolution of equation \eqref{intro1} in the prominent case where the coefficients $h,f,a$ take the following form:
\begin{equation} \label{coefficients}
\begin{aligned}
& h  = c_n \left( S_g - |\nabla \psi|_g^2 \right) ~~,~~f  =  c_n \left( 2 V(\psi) - \frac{n-1}{n} \tau^2 \right)  ~~,~~ a = \pi^2,  \\
\end{aligned} \end{equation}
where $c_n = \frac{n-2}{4(n-1)}$. In \eqref{coefficients}, $V : \RR \to \RR$ is a potential, $\psi: M \to \RR$ is a scalar-field, $\tau \in \RR$ is the mean curvature and $S_g$ is the scalar curvature of $g$. Physically speaking, $\psi$ and $\pi$ represent respectively the restriction of the ambient scalar-field and of its time-derivative to $M$ and $\tau$ is the mean curvature of the Cauchy hypersurface $M$ embedded in the space-time. See Bartnik-Isenberg \cite{BarIse} for a survey reference on the constraint equations.

\medskip
\noindent Following the terminology introduced in Premoselli \cite{Premoselli4}, in this work we consider equation \eqref{intro1} in the so-called \emph{focusing case}, defined as:
\ben \label{deffocusing}
\textrm{ focusing case: } f > 0 \textrm{ in } M.
\een
Since $a \ge 0$, standard variational arguments show that the coercivity of $\triangle_g + h$ is a necessary condition for \eqref{intro1} to possess smooth positive solutions. The existence of solutions of \eqref{intro1} in the focusing case was first obtained in Hebey-Pacard-Pollack \cite{HePaPo} and multiplicity issues were later on investigated in Ma-Wei \cite{MaWei}, Premoselli \cite{Premoselli2} and Holst-Meier \cite{HolstMeier}. Existence results for the non-constant-mean-curvature generalization of \eqref{intro1} in the physical case \eqref{coefficients} are in Premoselli \cite{Premoselli1}. 

\medskip

\noindent Stability issues for equation \eqref{intro1} in the focusing case have been investigated in Druet-Hebey \cite{DruHeb}, Hebey-Veronelli \cite{HebeyVeronelli} and Premoselli \cite{Premoselli2}. The stability of the general conformal constraint system in all dimensions has been investigated by Druet-Premoselli \cite{DruetPremoselli} and Premoselli \cite{Premoselli4}. In the specific case of equation \eqref{intro1}, the results of Druet-Hebey \cite{DruHeb} and Premoselli \cite{Premoselli4} yield in particular that in dimensions $n \ge 6$, equation \eqref{intro1} is stable with respect to perturbations of its coefficients as soon as there holds in $M$:
\ben \label{condstabilite}
h - c_n S_g + \frac{(n-2)(n-4)}{8(n-1)} \frac{\triangle_g f}{f} < 0,
\een
where $S_g$ denotes the scalar curvature of $g$. More precisely, if \eqref{condstabilite} holds, then for any sequences $(h_\alpha)_\alpha, (f_\alpha)_\alpha$ and $(a_\alpha)_\alpha$, with $a_\alpha \ge 0$ satisfying:
\[ \Vert h_\alpha - h \Vert_{C^0(M)} + \Vert f_\alpha - f\Vert_{C^2(M)} + \Vert a_\alpha - a\Vert_{C^0(M)} \to 0\]
as $\alpha \to + \infty$, and for any sequence $(\ua)_\alpha$ of positive solutions of:
\ben \label{eqperturbe}
\triangle_g \ua + h_\alpha \ua = f_\alpha \ua^{2^*-1} + \frac{a_\alpha}{\ua^{2^*+1}},
\een
there holds, up to a subsequence, that $\ua$ converges to some positive solution $u_0$ of \eqref{intro1} in $C^{1,\eta}(M)$ for all $0 < \eta < 1$. In dimensions $3 \le n \le 5$, Druet-Hebey \cite{DruHeb} showed that stability always holds if $a \not \equiv 0$. For general references on the notion of stability for critical elliptic equations, see Druet \cite{DruetENSAIOS} and Hebey \cite{HebeyZLAM}. The notion of elliptic stability for equation \eqref{intro1} in the above sense yields structural informations on the set of positive solutions of \eqref{intro1}, and as such is fundamental in order to understand its properties. The stability of \eqref{intro1} was for instance crucially used in Premoselli \cite{Premoselli2} to describe multiplicity issues for \eqref{intro1} in small dimensions. In the physical case of the conformal method, where the coefficients are given by \eqref{coefficients}, the stability and the instability of \eqref{intro1} reformulate in terms of the relevance of the conformal method in the determination of initial data sets. These issues are discussed in detail in Premoselli \cite{Premoselli4}.

\medskip

\noindent In this work we establish the sharpness of assumption \eqref{condstabilite} for the specific physical Einstein-Lichnerowicz equation. In dimensions $n \ge 6$, Druet-Hebey \cite{DruHeb} have proven that equation \eqref{intro1} is not stable in general by showing the existence of sequences of coefficients $(h_\alpha)_\alpha, (f_\alpha)_\alpha$, $(a_\alpha)_\alpha$ for which \eqref{eqperturbe} possesses blowing-up sequences of solutions on specific manifolds. These sequences, however, did not have the physical form \eqref{coefficients}. We extend here their results to the general inhomogeneous setting of an arbitrary Riemannian manifold of dimension $n \ge 6$ and exhibit non-compactness phenomena for \eqref{intro1}. In particular, when $n \ge 7$, we obtain non-compactness results for the Einstein-Lichnerowicz equation of the conformal method, where $h,f$ and $a$ are given by the restrictive expression \eqref{coefficients}. Our main result is as follows:

\begin{theo} \label{Th1}
Let $(M,g)$ be a closed $n$-dimensional Riemannian manifold, with $n \ge 7$. There exist examples of a $C^2$ potential $V$, a mean curvature $\tau \in \RR$, a $C^2$ scalar-field $\psi$ and a $C^2$ function $\pi$ which are focusing in the sense of \eqref{deffocusing} (with $f$ given by \eqref{coefficients}), such that the Einstein-Lichnerowicz equation:
\ben \label{intro2}
 \triangle_g u + c_n \left( S_g - |\nabla \psi|_g^2 \right) u = c_n \left( 2 V(\psi) - \frac{n-1}{n} \tau^2 \right)u^{2^*-1} + \frac{\pi^2}{u^{2^*+1}} 
 \een
admits a sequence of solutions $(u_k)_k$ that satisfy $\Vert u_k \Vert_{L^\infty(M)} \to + \infty$ as $k \to +\infty$. In addition, the $u_k$ are all different, possess a single blow-up point and blow-up with a non-zero limit profile. 
\end{theo}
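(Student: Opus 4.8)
The plan is to construct blowing-up solutions of \eqref{intro2} by a Lyapunov-Schmidt finite-dimensional reduction, building them as perturbations of a single standard bubble. First I would fix a point $\xi_0 \in M$ — which will be the blow-up point — and look for solutions of the form $u_k = \beta + W_{\delta_k,\xi_k} + \phi_k$, where $\beta > 0$ is a constant (the non-zero limit profile), $W_{\delta,\xi}(x) = \left(\frac{\delta}{\delta^2 + d_g(x,\xi)^2/(n(n-2))}\right)^{\frac{n-2}{2}}$ is the Aubin-Talenti bubble suitably cut off near $\xi$, $\delta_k \to 0$ is the concentration parameter, $\xi_k \to \xi_0$, and $\phi_k$ is a lower-order remainder orthogonal to the approximate kernel. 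The key structural observation is that in the Einstein-Lichnerowicz equation the singular term $\pi^2/u^{2^*+1}$ is \emph{negligible} near a blow-up point (where $u_k \to +\infty$), so that to leading order the blow-up analysis is governed by the Yamabe-type operator $\triangle_g + h$ acting against $f u^{2^*-1}$; the constant $\beta$ is present precisely so that the zeroth-order part of the equation, $h\beta \approx f\beta^{2^*-1} + a\beta^{-2^*-1}$, can be solved, which forces a compatibility relation between $h(\xi_0)$, $f(\xi_0)$ and $a(\xi_0) = \pi(\xi_0)^2$.

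Next I would design the coefficients so that the stability condition \eqref{condstabilite} \emph{fails} at $\xi_0$, i.e. so that
\ben \label{plan:obstruction}
h(\xi_0) - c_n S_g(\xi_0) + \frac{(n-2)(n-4)}{8(n-1)} \frac{\triangle_g f(\xi_0)}{f(\xi_0)} \ge 0 ,
\een
and in fact is strictly positive, since the sign of this quantity controls the leading term in the energy expansion of the approximate solution against the bubble. Because $h$ and $f$ are not free but constrained by \eqref{coefficients}, this requires choosing $\psi$ and $V$ appropriately: one uses the freedom in $|\nabla\psi|_g^2(\xi_0)$ and in $2V(\psi) - \frac{n-1}{n}\tau^2$ together with $\triangle_g$ of the latter to realize \eqref{plan:obstruction}. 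In dimension $n \ge 7$ the bubble $W_{\delta,\xi}$ has finite energy interactions of the right order, and the reduced energy functional $J_k(\delta,\xi) = E(\beta + W_{\delta,\xi})+ o(\cdot)$ admits an expansion of the form $J_k(\delta,\xi) = c_0 + c_1 \Big(h(\xi) - c_nS_g(\xi) + \tfrac{(n-2)(n-4)}{8(n-1)}\tfrac{\triangle_g f(\xi)}{f(\xi)}\Big)\delta^2 + O(\delta^{2+\epsilon})$ with $c_1 > 0$; a strict local maximum of the coefficient combination at $\xi_0$ then produces, via the reduced problem, a genuine critical point $(\delta_k,\xi_k)$ with $\delta_k \to 0$.

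The technical backbone consists of the standard steps of the reduction: (i) estimate the error $\|\triangle_g(\beta + W_{\delta,\xi}) + h(\beta+W_{\delta,\xi}) - f(\beta+W_{\delta,\xi})^{2^*-1} - a(\beta+W_{\delta,\xi})^{-2^*-1}\|$ in the appropriate dual norm, checking that the singular term contributes only a harmless lower-order quantity because $\beta + W_{\delta,\xi}$ is bounded below away from zero; (ii) invert the linearized operator on the space orthogonal to $\mathrm{Span}\{\partial_\delta W, \partial_{\xi_i} W\}$, using the coercivity of $\triangle_g + h$ and the non-degeneracy of the Talenti bubble for the critical exponent; (iii) solve for $\phi_k = \phi_k(\delta,\xi)$ by a fixed-point/contraction argument, with $\|\phi_k\| = o(\delta_k)$; and (iv) reduce to the finite-dimensional problem of finding critical points of $J_k(\delta,\xi)$ and extract $(\delta_k,\xi_k)$. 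Finally, since $\|u_k\|_{L^\infty} \sim W_{\delta_k,\xi_k}(\xi_k) \sim \delta_k^{-(n-2)/2} \to +\infty$, the solutions blow up with a single blow-up point at $\lim \xi_k = \xi_0$ and with limit profile $\beta \not\equiv 0$; their being pairwise distinct follows because distinct values of $\delta_k$ give distinct $L^\infty$ norms along the sequence.

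I expect the main obstacle to be step (ii)–(iii) combined with the constraint \eqref{coefficients}: one must simultaneously (a) keep the coefficients in the physical form dictated by a genuine $(\psi,V,\tau,\pi)$ while realizing \eqref{plan:obstruction} strictly — this is a non-trivial bookkeeping problem since $\triangle_g f$ and $f$ are coupled through $V(\psi)$ — and (b) carry the error estimates uniformly while the \emph{negative} singular nonlinearity $a u^{-2^*-1}$, although lower order, has the wrong sign for the naive energy argument and must be controlled carefully near the concentration scale (it behaves like a bounded perturbation only because $u_k \ge \beta/2 > 0$, but its linearization enters the invertibility argument). Dimension $n \ge 7$ is used precisely to ensure that the bubble-interaction and curvature terms, of size $\delta^2$, dominate the remainder $O(\delta^{2+\epsilon})$ coming from the metric, the cut-off, and the singular term; in $n = 6$ logarithmic corrections would appear and the construction would break down, consistent with the sharp role of \eqref{condstabilite}.
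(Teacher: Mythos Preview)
Your overall Lyapunov--Schmidt architecture (strictly stable base solution plus a single bubble plus a remainder in the orthogonal complement) is correct in spirit, and you have correctly identified two key features: the singular term $\pi^2/u^{2^*+1}$ is lower order near a concentration point, and a non-constant $f$ is forced because the physical form \eqref{coefficients} gives $h-c_nS_g=-c_n|\nabla\psi|_g^2\le 0$. But the plan has a genuine gap at the decisive step.

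\medskip

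\noindent\textbf{Where does the infinite sequence come from?} Your step (iv) produces, for a \emph{fixed} set of coefficients, at most one critical point $(\delta_*,\xi_*)$ of the reduced functional: a strict local maximum of a smooth obstruction function at $\xi_0$ yields at most one nearby concentration. The sentence ``distinct values of $\delta_k$ give distinct $L^\infty$ norms'' does not explain why infinitely many critical points should exist, and with the expansion you write, $J(\delta,\xi)=c_0+c_1\,g(\xi)\,\delta^2+O(\delta^{2+\varepsilon})$ with $g(\xi_0)>0$, there is not even one interior critical point in $\delta$ (nothing balances the $\delta^2$ term). The paper resolves this by a device inspired by the Brendle and Brendle--Marques non-compactness constructions for Yamabe: the coefficients $h,f,\pi$ are themselves built with multi-scale structure, namely $f=1+\Psi_0$ with
\[
\Psi_0(x)=\sum_{k\ge k_0}\varepsilon_k\,\Psi\Big(\tfrac{1}{\mu_k}\big(\exp_{\xi_k}^{g_{\xi_k}}\big)^{-1}(x)\Big),
\]
a sum of disjoint localized bumps at a sequence $\xi_k\to\xi_0$ and scales $\mu_k\to 0$. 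Near each $\xi_k$ the reduced energy then has two genuinely competing contributions: the $\varepsilon_k$-term coming from $\Psi$, and either the interaction $\int f\,u_0\,W^{2^*-1}\sim\delta_k^{(n-2)/2}$ or the Weyl-curvature term $\sim\delta_k^4$. The relation \eqref{defmk} between $\mu_k$ and $\varepsilon_k$ is chosen precisely so that these balance. This yields, for every large $k$, a critical point of the $k$-th localized reduced problem and hence a solution $u_k$ concentrating at $\xi_k$; all of them solve the \emph{same} equation because the bumps are disjointly supported and summable in $C^2$.

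\medskip

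\noindent\textbf{A second, related point.} Because the balancing term involves the specific profile $\Psi$, the limiting reduced functional $H(t,p)$ has only a \emph{saddle-type} nondegenerate critical point (see \eqref{conclu4}--\eqref{conclu6}), not a strict local extremum. The paper therefore needs the reduced energy expansion to hold in $C^1_{\mathrm{loc}}$, not merely $C^0$, so as to conclude by a degree/homotopy argument; establishing this $C^1$ control (Propositions \ref{propenergie} and \ref{propreste}) is a substantial part of the work and is absent from your outline.

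\medskip

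In summary: the reduction machinery you describe is right, but without the Brendle-type multi-bump design of the coefficients and the matching of scales $\mu_k\leftrightarrow\varepsilon_k$, the argument cannot manufacture more than a single concentrating solution, and so does not prove the theorem.
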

\noindent Theorem \ref{Th1} shows in particular that the set of positive, physical solutions of \eqref{intro2} is not compact in the $C^2(M)$-topology and that equation \eqref{intro2} has therefore an infinite number of positive solutions. We state it as a Corollary:

\begin{corol} \label{corolmulti}
Let $(M,g)$ be a closed $n$-dimensional Riemannian manifold, with $n \ge 7$. There exist examples of scalar-field data $\psi$ and $\pi$, of a potential $V$ and of a mean curvature $\tau$ such that the Einstein-Lichnerowicz equation \eqref{intro2} possesses an infinite number of solutions.
\end{corol}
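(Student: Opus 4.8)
The plan is to construct the desired example by a Lyapunov--Schmidt finite-dimensional reduction, building solutions of \eqref{intro2} that look, after rescaling, like a single standard bubble $B_{\mu,\xi}$ concentrating at a well-chosen point with concentration parameter $\mu_k \to 0$, superposed on a fixed positive background $u_0$ which solves a limit equation. The key point is that, in view of the stability criterion \eqref{condstabilite}, one must arrange the coefficients so that the left-hand side of \eqref{condstabilite} is \emph{positive} at the concentration point; the natural way to do this on an arbitrary manifold is to impose $\psi \equiv 0$ (so that $h = c_n S_g$, killing the curvature term and the $|\nabla\psi|^2$ term), and then to choose $V$ and $\tau$ so that $f = c_n(2V(0) - \tfrac{n-1}{n}\tau^2)$ is a prescribed positive function whose Laplacian at the blow-up point makes $\tfrac{(n-2)(n-4)}{8(n-1)}\tfrac{\triangle_g f}{f}>0$. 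One then picks $\pi$ to be a small, strictly positive $C^2$ function — crucially not identically zero, to stay in the genuine Einstein--Lichnerowicz regime — and checks that the resulting data are focusing in the sense of \eqref{deffocusing}.

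First I would fix the limit object: a positive solution $u_0 \in C^2(M)$ of $\triangle_g u_0 + c_n S_g u_0 = f u_0^{2^*-1}$ (or of the full unperturbed problem with the chosen small $a=\pi^2$), whose existence follows from the subcritical-approximation / variational arguments recalled in the introduction (Hebey--Pacard--Pollack \cite{HePaPo}). Second, I would set up the ansatz $u = u_0 + B_{\mu_k,\xi_k} + \varphi$, with $\xi_k$ near a nondegenerate critical point of the relevant reduced energy and $\varphi$ in a suitable orthogonal complement, and write \eqref{intro2} in the form $\triangle_g u + hu - fu^{2^*-1} - a u^{-2^*-1} = 0$; the negative-power term $a/u^{2^*+1}$ is smooth and bounded near $u_0 > 0$ and contributes only lower-order perturbations since $B_{\mu_k,\xi_k} \to +\infty$ there, so it does not interfere with the bubble interaction. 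Third, a standard fixed-point argument (using the coercivity of $\triangle_g + h$ and the invertibility of the linearized operator on the orthogonal complement, away from the kernel generated by the bubble's dilation and translation parameters) produces $\varphi = \varphi_k$ with $\|\varphi_k\| \to 0$. Fourth, I would compute the reduced energy $J_k(\mu,\xi)$ as a function of the finite-dimensional parameters and show it has a critical point; this is exactly where the sign condition on $\tfrac{\triangle_g f}{f}$ enters, guaranteeing that the leading-order expansion of $J_k$ has an interior critical point in $\mu$ (the mechanism by which \eqref{condstabilite} is sharp). Finally, $\|u_k\|_{L^\infty} \sim \mu_k^{-(n-2)/2} \to +\infty$, the $u_k$ are pairwise distinct for distinct $k$ because their blow-up profiles have different concentration scales, and each has a single blow-up point with nonzero limit profile $u_0$, which gives Theorem \ref{Th1}; Corollary \ref{corolmulti} is then immediate, since a sequence converging to $+\infty$ in $L^\infty$ contains infinitely many distinct terms.

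The main obstacle I anticipate is twofold. The analytic difficulty lies in the energy expansion in the critical dimensions $n \ge 7$: the bubble interaction with the background $u_0$, the metric's deviation from flatness (conformal normal coordinates, Weyl tensor contributions), and the $f u^{2^*-1}$ nonlinearity must be expanded to an order fine enough that the $\mu$-dependent term carrying the factor $h - c_n S_g + \tfrac{(n-2)(n-4)}{8(n-1)}\tfrac{\triangle_g f}{f}$ is genuinely the leading correction — in low dimensions other (geometric) terms would dominate, which is precisely why the statement needs $n \ge 7$ (and $n \ge 6$ for the non-physical version). The structural difficulty is to fit the physical constraints \eqref{coefficients} into this scheme: one is not free to choose $h$ and $f$ independently, since both are tied to $\psi$, $V$, $\tau$; the choice $\psi \equiv 0$ resolves this by decoupling $h$ from $f$ (leaving $h = c_n S_g$, so $h - c_n S_g = 0$ exactly, and the entire instability must be produced through $\triangle_g f / f$), and one must then verify that a function $V$ with $2V(0)$ equal to the desired value of $f/c_n + \tfrac{n-1}{n}\tau^2$ and of class $C^2$ can be chosen consistently with focusing, which is a matter of elementary interpolation. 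Once these two points are handled, the remaining estimates are routine Lyapunov--Schmidt bookkeeping of the type already carried out in \cite{DruHeb} and the related literature.
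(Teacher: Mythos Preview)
Your overall Lyapunov--Schmidt strategy and the final deduction of the Corollary from Theorem~\ref{Th1} are correct, but the structural setup you propose cannot work and misses the key construction of the paper.

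The fatal gap is your choice $\psi\equiv 0$. With the physical coefficients \eqref{coefficients} one has $f=c_n\big(2V(\psi)-\tfrac{n-1}{n}\tau^2\big)$, where $V:\RR\to\RR$ and $\tau\in\RR$. If $\psi\equiv 0$ then $V(\psi)=V(0)$ is a single real number, hence $f$ is a \emph{constant} function on $M$ and $\triangle_g f\equiv 0$. Your entire instability mechanism relies on producing $\tfrac{(n-2)(n-4)}{8(n-1)}\tfrac{\triangle_g f}{f}>0$ at the concentration point, which is impossible here; and since also $h-c_nS_g=-c_n|\nabla\psi|^2\equiv 0$, the left-hand side of \eqref{condstabilite} vanishes identically and the reduced energy has no interior critical point in $\mu$. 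In short, within the physical constraints \eqref{coefficients} the only way to make $f$ non-constant is to make $\psi$ non-constant, which is precisely what you discarded.

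The paper resolves this by taking a \emph{non-trivial} scalar field $\psi=\Psi_0$ built, in the style of Brendle and Brendle--Marques, as an infinite sum of small rescaled radial bumps supported near a sequence of points $\xi_k\to\xi_0$ (see \eqref{defPsi0}). With a \emph{linear} potential $V$ one then gets $f=1+\Psi_0$, which is non-constant with $\triangle_g f>0$ near each $\xi_k$; simultaneously $h=c_n(S_g-|\nabla\Psi_0|^2)$ stays just below $c_nS_g$. This coupling of $h$ and $f$ through $\psi$ is the structural point you identified as a difficulty but then handled incorrectly. A second feature you should note is that the paper does not vary $\mu_k$ at a single point: the coefficients are fixed once and for all via the sum $\Psi_0$, and for each $k$ one constructs a solution $u_k$ concentrating at the corresponding $\xi_k$ using only the $k$-th bump. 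This is what yields infinitely many distinct solutions of a \emph{single} equation, rather than a family of solutions of perturbed equations.
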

\noindent In Corollary \ref{corolmulti}, in view of Theorem \ref{Th1}, the solutions are obtained as a sequence that blows-up at a critical point of the scalar-field $\psi$. In particular, in view of the physical origin of equation \eqref{intro2} in the analysis of the constraint system, Corollary \ref{corolmulti} shows that when condition \eqref{condstabilite} is not satisfied, the conformal constraint system of equations possesses an infinite number of solutions. This yields in turn an infinite number of initial data sets. Here again we refer to the discussion in Premoselli \cite{Premoselli4} for the geometric and physical implications of this fact.

\medskip

\noindent In dimension $6$, we have the following counterpart:
\begin{theo} \label{Th2}
Let $(M,g)$ be a $6$-dimensional Riemannian manifold. There exist examples of $C^2$ function $h,f$ and $a$ such that the equation
\[ \triangle_g u+ h u = f u^{2^*-1} + \frac{a}{u^{2^*+1}}\]
admits a sequence of solutions $(u_k)_k$ that blow-up in the $L^\infty$-norm. In addition, the $u_k$ are all different, possess a single blow-up point and blow-up with a non-zero limit profile.
\end{theo}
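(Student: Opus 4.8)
The plan is to reduce Theorem \ref{Th2} to the same construction used for Theorem \ref{Th1}, which produces blowing-up solutions on an arbitrary closed manifold when the stability condition \eqref{condstabilite} fails. In dimension $n \ge 7$ the blow-up profile is a bubble of the form $W_{\alpha}(x) = \left( \frac{\mu_\alpha}{\mu_\alpha^2 + d_g(x_\alpha,x)^2/(n(n-2))} \right)^{\pui}$ centered at a critical point of $\psi$, and the obstruction one must defeat comes from the sign of the quantity $h - c_n S_g + \frac{(n-2)(n-4)}{8(n-1)}\frac{\triangle_g f}{f}$ at the blow-up point. The key dimensional feature is that in dimension $6$ the exponent $\frac{(n-2)(n-4)}{8(n-1)}$ equals $\frac{2}{5}$ and the energy expansion of the reduced functional changes its leading order: the term carrying $\mu_\alpha^2$ acquires a logarithmic correction $\mu_\alpha^2 \log\frac{1}{\mu_\alpha}$, rather than $\mu_\alpha^2$ as when $n \ge 7$ or $\mu_\alpha^{n-4}$ as when $n$ is large. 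So the first step is to set up the Lyapunov--Schmidt finite-dimensional reduction with the bubble $W_\alpha$ as approximate solution, exactly as in the proof of Theorem \ref{Th1}, and to record that the singular term $\frac{a}{u^{2^*+1}}$ contributes only lower-order corrections to the energy since $u$ stays uniformly bounded below near the concentration point.

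The second step is the energy expansion. I would expand the reduced energy $J_\alpha(W_\alpha + \phi_\alpha)$ in powers of the concentration parameter $\mu_\alpha$ and the location $x_\alpha$, keeping track of the geometric terms. For $n=6$ the relevant expansion takes the schematic form
\begin{equation*}
J_\alpha(W_\alpha) = c_0 + c_1 \left( h(x_\alpha) - c_6 S_g(x_\alpha) + \frac{2}{5}\frac{\triangle_g f(x_\alpha)}{f(x_\alpha)} \right) \mu_\alpha^2 \log\frac{1}{\mu_\alpha} + o\!\left( \mu_\alpha^2 \log\frac{1}{\mu_\alpha} \right),
\end{equation*}
with $c_0, c_1 > 0$ explicit constants, together with a term involving $\nabla f(x_\alpha)$ that forces $x_\alpha$ to sit near a critical point of $f$. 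One then chooses $h, f$ (and hence, in the physical formulation, $\psi, V, \tau$) so that the bracketed coefficient is \emph{positive} at some point $\xi_0$ which is simultaneously a nondegenerate critical point of $f$; this is the violation of \eqref{condstabilite}. With that sign, the function $\mu \mapsto J_\alpha$ has a strict local maximum in $\mu$ at an interior point $\mu_\alpha \to 0$, and combined with the nondegenerate critical point condition on $x_\alpha$ this yields, for each large $k$, a critical point of the reduced functional, hence a genuine solution $u_k$ of the equation with $\Vert u_k \Vert_{L^\infty(M)} \to +\infty$.

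The third step is the qualitative conclusions. That the $u_k$ are pairwise distinct follows because their $L^\infty$ norms diverge, so only finitely many can coincide; that each has a single blow-up point follows from the $C^1$-control of the remainder $\phi_\alpha = o(1)$ in $H^1$ provided by the reduction, which localizes the concentration at $x_\alpha$; and the non-zero limit profile is exactly the statement that $u_k \to u_\infty$ with $u_\infty$ a positive solution of the limit equation (here the strict lower bound coming from $a \not\equiv 0$ prevents the limit from vanishing), while the bubble accounts for the extra mass. I expect the main obstacle to be the energy expansion in the dimension-$6$ case: one must carefully isolate the logarithmic term, which requires expanding the Green's-function-type interaction and the local metric expansion to one more order than in the higher-dimensional case, and checking that the singular nonlinearity $a u^{-2^*-1}$ and the cross terms with $\phi_\alpha$ do not produce a competing contribution of size $\mu_\alpha^2 \log\frac1{\mu_\alpha}$ with an uncontrolled sign. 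Once the coefficient of $\mu_\alpha^2 \log\frac1{\mu_\alpha}$ is correctly identified and shown to be governed solely by the stability quantity, the rest of the argument is a routine adaptation of the proof of Theorem \ref{Th1}.
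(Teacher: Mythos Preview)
Your proposal has a genuine gap in the energy expansion. In dimension $6$ the term $\int_M (h - c_n S_g) W^2$ scales like $\mu^2$, \emph{not} $\mu^2\log(1/\mu)$: the integral $\int_{\RR^6} (1+|y|^2/24)^{-4}\,dy$ converges, so no logarithm appears from this source (a log in dimension $6$ shows up only at order $\mu^4$, through the Weyl correction). More seriously, even granting your schematic expansion, a single term $c_1\,\mu^2\log(1/\mu)$ with $c_1>0$ fixed has no interior critical point tending to $0$; to produce a \emph{sequence} $(u_k)_k$ with concentration scales $\mu_k\to 0$ you need two competing terms whose balance point drifts with $k$, and your outline never identifies the second term or the $k$-dependent parameter driving the drift. (Incidentally, $\tfrac{(n-2)(n-4)}{8(n-1)} = \tfrac{1}{5}$ when $n=6$, not $\tfrac{2}{5}$.)

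The paper's argument is structurally different and does not pass through the stability quantity at all. It fixes $f\equiv 1$, builds $u_0$ as the solution of $\triangle_g u_0 + \tfrac{1}{5}S_g u_0 = -u_0^2 + a_0/u_0^4$, so that $u_0$ is a strictly stable solution of the target equation with $h_0 = \tfrac{1}{5}S_g + 2u_0$, and then perturbs \emph{both} $h$ and $a$ by sums $\sum_k \vek H\big(\mk^{-1}(\exk)^{-1}(\cdot)\big)$ concentrated near a sequence $\xi_k\to\xi_0$. The choice $h_0 - c_nS_g = 2u_0$ is precisely what makes the $\delta^2$ contributions from $\int(h_0-c_nS_g)W^2$ and from the cross term $\int u_0 W^{2^*-1}$ cancel. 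The reduced energy then reads $-5H(p)\,\vek\,\delta_k(t)^2 + C_0\,a_0(\xi_0)\,\delta_k(t)^3 + o(\cdot)$, with no logarithm; setting $\delta_k=\vek$ gives the limit profile $-5H(p)t^2 + C_0 a_0(\xi_0)t^3$, which has a strict local \emph{minimum} at some $(t_0,0)$ that persists for $k$ large. The mechanism you are missing is this $k$-dependent perturbation of the coefficients, which is what forces the concentration scale to zero; merely arranging the stability quantity to be positive at a point does not, by itself, yield blow-up.
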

\noindent Again, Theorem \ref{Th2} yields a non-compactness result for the set of positive solutions of \eqref{intro1} in the $C^2(M)$-topology and shows that, for a suitable choice of $h,f$ and $a$, equation \eqref{intro1} possesses an infinite number of solutions. 

\medskip
\noindent Theorems \ref{Th1} and \ref{Th2} show that condition \eqref{condstabilite} is sharp in order for \eqref{intro1} to be stable. They deal with the high-dimensional case of equation \eqref{intro1}. As already stated, note also that, as proven in Druet-Hebey \cite{DruHeb},
equation \eqref{intro1} is always stable in dimensions $3$ to $5$ when $a \not \equiv 0$.

\medskip

\noindent We prove Theorem \ref{Th1} by performing a finite-dimensional Lyapunov-Schmidt reduction in Sections \ref{setting} to 
 \ref{conclusion} below. The main problem when dealing with the physical equation where the coefficients are given by \eqref{coefficients} is that the coefficient $h$ always lies below the geometric threshold of the scalar curvature. We therefore take into account a non-constant function $f$ to perform the finite-dimensional reduction and this forces us to obtain $C^1$-uniform asymptotic expansions in the computation of the energy to conclude. In Section \ref{setting} we set the problem and give the explicit expression of the functions $V$, $\psi$, $\pi$ and $\tau$ chosen. In section \ref{findim} we describe the blow-up profiles we will work with and perform a Lyapunov-Schmidt finite-dimensional reduction. Sections \ref{energy} and \ref{reste} are devoted to the proof of $C^1$-uniform asymptotic expansions of the reduced energy and of the remainder term. Finally, Section \ref{conclusion} gives the final argument to conclude the proof of Theorem \ref{Th1}. The proof of Theorem \ref{Th2} is given in Section \ref{negale6}.

\section{Setting of the problem} \label{setting}

The proof of Theorem \ref{Th1} relies on a finite-dimensional reduction method. We describe in this section its setting. Let $n \ge 7$ and $(M,g)$ be a $n$-dimensional Riemannian manifold. Let $\xi_0 \in M$ be some fixed point in $M$. Assume that $|W(\xi_0)|_g > 0$ if $(M,g)$ is not locally conformally flat, where $W(\xi_0)$ denotes the Weyl tensor of $g$ at $\xi_0$. On some neighborhood $U$ of $\xi_0$ we can find a smooth determination $\xi \mapsto (e_1(\xi), \cdots, e_n(\xi))$ of an orthonormal basis of $T_\xi M$. In the following, for $\xi \in U$, the notation $\textrm{exp}_\xi^{g_\xi}$ will denote the exponential map for the metric $g_\xi$ at point $\xi$ with the identification of $T_\xi M$ to $\RR^n$ via the field $(e_1, \cdots, e_n)$. The standard conformal normal coordinates theorem of Lee-Parker \cite{LeeParker} asserts that there exists $\Lambda \in C^\infty(M\times M)$ such that for any point $\xi \in M$ there holds, for some arbitrarily large integer $N$:
\ben \label{confnorm}
\left| \left( \textrm{exp}_\xi^{g_\xi} \right)^* g_{\xi} \right|(y) = 1 + O(|y|^N),
\een
$C^1$-uniformly in $\xi \in M$ and in $y \in T_\xi M$ in a small geodesic ball for the metric $g_\xi$. In \eqref{confnorm} we have let 
\ben \label{metconforme}
g_\xi = \Lambda_\xi^{\frac{4}{n-2}}g,
\een 
where the conformal factor  $\Lambda_\xi = \Lambda(\xi, \cdot)$ can in addition be chosen to satisfy:
\ben \label{propLambda}
\Lax(\xi) = 1, \quad \nabla \Lax (\xi) = 0 .
\een

\medskip

\noindent We assume in the following that $(M,g)$ is of positive Yamabe type, that is, such that $\triangle_g + c_n S_g$ is coercive, where $c_n$ is as in \eqref{coefficients}. Let $\vp $ be a smooth positive function in $M$ and let $\tilde g = \vp^{\frac{4}{n-2}} g$. By the conformal covariance property of the conformal laplacian there holds, for any smooth positive function $u_0$: 
\be
\left( \triangle_g + c_n S_g \right) (\vp u_0) = \vp^{2^*-1} \left( \triangle_{\tilde g} + c_s S_{\tilde g}\right) u_0.
\ee
Therefore, it is easily seen that for any smooth positive function $u_0$, the function $\vp u_0$ solves \eqref{intro2} for some given background coefficients $(V, \psi, \tau, \vp^{2^*}\pi)$ if and only $u_0$ solves:
\[ \triangle_{\tilde g} u_0 + c_n \left( S_{\tilde g} - | \nabla \psi |_{\tilde g}^2\right) u_0 = c_n \left( 2 V(\psi) - \frac{n-1}{n} \tau^2 \right)u_0^{2^*-1} + \frac{ \pi^2}{u_0^{2^*+1}}. \] 
In the following we may therefore assume that $S_g$ is a positive constant satisfying:
\ben \label{condSg}
c_n S_g > 2.
\een
Let $\pi_0^2 \equiv  c_n S_g - 1 > 1$ and let $u_0 \equiv 1$. Then $u_0$ solves, in $M$:
\ben \label{ELbase}
\triangle_g u_0 + c_n S_g u_0 = u_0^{2^*-1} + \frac{\pi_0^2}{u_0^{2^*+1}}.
\een
Because of \eqref{condSg} it is easily seen that \emph{$u_0$ is strictly stable}, i.e. that there exists a positive constant $C$ such that, for any $\vp \in H^1(M)$:
\ben \label{u0nondeg}
\int_{M} |\nabla \vp|_g^2 + \left[ c_n S_g  - (2^*-1) u_0^{2^*-2} + (2^*+1) \frac{\pi_0^2}{u_0^{2^*+2}} \right] \vp^2 dv_g \ge C \Vert \vp \Vert_{H^1}. 
\een

\medskip

\noindent Let $\beta \in C^{\infty}(\mathbb{R})$ be a compactly supported function, with support contained in  $ [-M-1,M+1]$ and satisfying $\beta \equiv 1$ on $[-M, M]$, for some positive $M$ to be chosen later. Let $\Psi$ in $\RR^n$ be given by:
\ben \label{defPsi}
\Psi(x) = - |x|^2 \beta(|x|^2). 
\een
Let $(\vek)_k$ be a sequence of positive real numbers which converge towards zero as $k \to \infty$. We define a sequence $(\mk)_k$ as follows:
\ben \label{defmk}
\mk = \left \{
\bal
& \vek^{\frac{2}{n-2}} &\textrm{ if } (M,g) \textrm{ is l.c.f. or if } 7 \le n \le 9 \\
& \vek^{\frac{1}{4}}  &\textrm{ if } n \ge 10 \textrm{ and } (M,g) \textrm{ is not l.c.f.}.\\
\eal
\right.
\een
Let $(r_k)_k$ be a sequence of positive real numbers which converges towards zero as $k \to \infty$. We assume that the following relations hold:
\ben \label{condrk}
\bal
r_k  = o(k^{-2}) \quad \textrm{ and } \quad \mk = o (r_k^3) \\
\eal
\een
as $k \to + \infty$. Examples of sequences $(\vek)_k$ and $ (r_k)_k$ that satisfy \eqref{condrk} are for instance:
\[ \vek = k^{-4(n-2)} \textrm{ and } r_k = k^{-\frac{7}{3}}. \]
We define a sequence $(\xk)_k$ of points of $M$ concentrating at $\xi_0$ as above by:
\ben \label{defxik}
\xi_k = \exp_{\xi_0}^{g_{\xi_0}} \left( \left( \frac{1}{k}, 0, \cdots 0 \right)\right).
\een
Then $\xi_k \to \xi_0$ as $k \to + \infty$ and there holds: $d_g(\xk, \xi_{k+1}) \sim \frac{1}{k^2}$ as $k \to +\infty$.
Inspired by the non-compactness constructions in Brendle \cite{Bre} and Brendle-Marques \cite{BreMa} we let in what follows, for any $x \in M$:
\ben \label{defPsi0}
\Psi_0(x) = \sum_{k \ge k_0} \vek \Psi \left( \frac{1}{\mu_k} \left(\exk \right)^{-1} (x) \right),
\een
where $\Psi$ is as in \eqref{defPsi}, $\mk$ is as in \eqref{defmk} and where $k_0 >0$ is some fixed integer, to be chosen large enough. We define, for any $x \in M$, the following functions $h,f$ and $\pi$ by:
\ben \label{coeffsEL}
\left \{ \bal
h & = c_n \left( S_g  - |\nabla \Psi_0(x)|_g^2 \right), \\
f & = 1 + \Psi_0, \\
\pi & = \left(\pi_0^2 - c_n |\nabla \Psi_0|_g^2 - \Psi_0   \right)^{\frac{1}{2}}, \\
\eal \right.
\een
where $\Psi_0$ is as in \eqref{defPsi0}. Note that by \eqref{condrk}, up to assuming $k_0$ large enough, $\pi$ and $f$ in \eqref{coeffsEL} are positive functions in $C^2(M)$. It is easily seen with \eqref{ELbase} that $u_0$ is a solution of the following equation in $M$:
\ben \label{EL}  \tag{$EL$}
\triangle_g u + hu = f u^{2^*-1} + \frac{\pi^2}{u^{2^*+1}},
\een
and that, because of \eqref{u0nondeg}, $u_0$ as a solution of \eqref{EL} is still strictly stable up to choosing $k_0$ large enough. Note that the functions $h,f$ and $\pi$ defined in \eqref{coeffsEL} take the physical form given by \eqref{coefficients} for the choice of a scalar-field given by $\Psi_0$ and for suitably chosen constant $\tau$ and linear potential $V(\Psi)$.  

\medskip

\noindent 
We aim at constructing sequences of solutions of \eqref{EL} developing one bubble around each $\xi_k$ as in \eqref{defxik}, for $k$ large enough. As a first task, we take care of the negative nonlinearity in \eqref{EL}. For any $\ve >0$, we define $\eta_\ve$ in $\RR$ as follows:
\ben \label{defetaeps}
\eta_\ve(r) = \left \{
\bal
& \ve \textrm{ if } r < \ve \\
& r \textrm{ if } r \ge \ve, \\ 
\eal \right.
\een
and introduce the following truncation of \eqref{EL}:
\ben \label{ELeps} \tag{$EL_\ve$}
\triangle_g u + hu = f u^{2^*-1} + \frac{\pi^2}{\eta_\ve(u)^{2^*+1}}.
\een
A first easy remark is the following:
\begin{lemme} \label{minor}
There exists $\ve_0 > 0$ such that for any $0 \le \ve \le \ve_0$, any $C^2$ positive solution $u$ of \eqref{ELeps} satisfies:
\[ \min_M u \ge \ve_0. \]  
In particular, for $0 \le \ve \le \ve_0$, any $C^2$ positive solution of \eqref{ELeps} is also a solution of \eqref{EL}.
\end{lemme}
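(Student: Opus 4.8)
The plan is to argue by contradiction, exploiting the coercivity of $\triangle_g+h$ together with the fact that the "bad" negative term $\pi^2/\eta_\ve(u)^{2^*+1}$ is a decreasing function of $u$. Suppose that for some sequence $\ve_j \to 0$ there exist $C^2$ positive solutions $u_j$ of \eqref{ELeps} with $\min_M u_j < \ve_j$. The strategy is to show that such solutions would have to be uniformly bounded below by a positive constant independent of $\ve$, contradicting $\min_M u_j \to 0$.

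First I would obtain a uniform lower bound on $u_j$ away from its minimum region, or rather directly derive the bound by a maximum-principle argument. At a point $x_j \in M$ where $u_j$ attains its minimum we have $\triangle_g u_j(x_j) \le 0$, so evaluating \eqref{ELeps} at $x_j$ gives
\[
h(x_j) u_j(x_j) \ge f(x_j) u_j(x_j)^{2^*-1} + \frac{\pi^2(x_j)}{\eta_\ve(u_j(x_j))^{2^*+1}}.
\]
If $\min_M u_j = u_j(x_j) < \ve$, then $\eta_\ve(u_j(x_j)) = \ve$ and the right-hand side is at least $\pi^2(x_j)\,\ve^{-(2^*+1)}$, while the left-hand side is at most $\|h\|_\infty\,\ve$. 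Since $\pi$ is a fixed positive $C^2$ function (by the construction in \eqref{coeffsEL}, and up to taking $k_0$ large), we have $\inf_M \pi^2 =: c_0 > 0$, so this forces $\|h\|_\infty\, \ve \ge c_0\, \ve^{-(2^*+1)}$, i.e. $\ve^{2^*+2} \ge c_0/\|h\|_\infty$. Choosing $\ve_0 := \tfrac12 \big(c_0/\|h\|_\infty\big)^{1/(2^*+2)}$, say, we see that for $\ve \le \ve_0$ this is impossible; hence $\min_M u_j \ge \ve$ whenever $\min_M u_j$ is attained at a point where $u_j < \ve$ would be needed — more precisely, the above shows $\min_M u_j \ge \ve$ cannot fail, so in fact $\eta_\ve(u_j) \equiv u_j$ wherever the minimum is small. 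To get the cleaner statement $\min_M u \ge \ve_0$, I would note that if $\min_M u = u(x_j) \ge \ve$ then at $x_j$ we get $h(x_j)u(x_j) \ge \pi^2(x_j) u(x_j)^{-(2^*+1)}$, hence $u(x_j)^{2^*+2} \ge \inf_M\pi^2 / \|h\|_\infty$, which again gives $\min_M u \ge (c_0/\|h\|_\infty)^{1/(2^*+2)} \ge \ve_0$.

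With this, the last sentence of the lemma is immediate: for $0 \le \ve \le \ve_0$, any $C^2$ positive solution $u$ of \eqref{ELeps} satisfies $u \ge \ve_0 \ge \ve$ everywhere, so $\eta_\ve(u) = u$ pointwise, and hence $u$ solves \eqref{EL}. The only mild subtlety — and the one place I would be careful — is making sure the positive lower bound $c_0 = \inf_M \pi^2$ is genuinely uniform: this relies on $\Psi_0$ and $|\nabla\Psi_0|_g$ being uniformly small in $C^0$, which is guaranteed by \eqref{condrk} and the choice of $k_0$ large, as already remarked after \eqref{coeffsEL}. Apart from that, the argument is a one-line touch of the maximum principle and presents no real obstacle; the key point is simply that the repulsive term $\pi^2/u^{2^*+1}$ blows up as $u \to 0$ and so automatically repels solutions from zero, with a rate depending only on $\inf_M \pi^2$ and $\|h\|_\infty$.
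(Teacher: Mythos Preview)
Your argument is correct and in fact more elementary than the paper's. The paper does not evaluate the equation at a minimum point; instead it writes a Green representation $u(x)=\int_M G(x,y)\big(f u^{2^*-1}+\pi^2/\eta_\ve(u)^{2^*+1}\big)\,dv_g(y)$, uses that the Green function of $\triangle_g+h$ is bounded below by a positive constant, and then bounds the integrand pointwise from below by $C^{-1}\min\big(\pi^2\ve^{-2^*-1},\,f^{(2^*+1)/(2\cdot 2^*)}\pi^{(2^*-1)/2^*}\big)$, which is uniformly positive for $\ve$ small.

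Your route avoids invoking the Green function entirely: evaluating \eqref{ELeps} at a minimum point and using $\triangle_g u\le 0$ there immediately forces $\|h\|_\infty\,u(x_j)\ge \pi^2(x_j)/\eta_\ve(u(x_j))^{2^*+1}$, from which the bound follows by elementary algebra. This is shorter and uses nothing beyond the strong maximum principle. The Green-function approach has the minor advantage of giving an \emph{integral} lower bound on $\min_M u$ (which could in principle be useful if $\pi$ were allowed to vanish on a small set), but for the statement at hand, where $\inf_M\pi^2>0$, your pointwise argument is perfectly adequate and arguably cleaner. One cosmetic remark: the contradiction setup with sequences $\ve_j\to 0$ in your opening paragraph is never actually used---your direct argument already gives the uniform $\ve_0$---so you can simply drop it.
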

\begin{proof}
Let $\ve >0$ be fixed and $G$ denote the Green's function of the operator $\triangle_g +h$ in $M$. By standard arguments (see Robert \cite{RobDirichlet}), $G$ is bounded from below by some positive constant. If $u$ is a positive solution of \eqref{ELeps} a representation formula therefore yields:
\[ \min_M u \ge \frac{1}{C} \int_M \left( f u^{2^*-1} + \frac{\pi^2}{\eta_\ve(u)^{2^*+1}} \right) dv_g \]
for some positive constant $C$. By examining the values taken by the right-hand side it is easily seen that there holds in $M$:
\[ f u^{2^*-1} + \frac{\pi^2}{\eta_\ve(u)^{2^*+1}} \ge  \frac{1}{C} \min \left( \pi^2 \ve^{-2^*-1}, f^{\frac{2^*+1}{2 \cdot 2^*}} \pi^{\frac{2^*-1}{2^*}} \right),\]
which concludes the proof up to choosing $\ve$ small enough.
\end{proof}

From now on, we let $0 < \ve \le \ve_0$ be small enough so that $u_0 \equiv 1$ solves both  \eqref{EL} and \eqref{ELeps}. We let $\eta = \eta_\ve$ be fixed. By Lemma \ref{minor} the construction of blowing-up sequences of solutions of \eqref{EL} reduces to the construction of such sequences for \eqref{ELeps}.

\section{The finite-dimensional reduction method} \label{findim}

 We introduce the energy functional associated to \eqref{ELeps}: for any $u \in H^1(M)$,
\ben \label{defJ}
J(u) = \frac{1}{2} \int_M \left( |\nabla u|_g^2 + h u^2 \right) dv_g - \frac{1}{2^*} \int_M f (u^+)^{2^*} dv_g + \frac{1}{2^*} \int_M \frac{\pi^2}{\eta(u)^{2^*}} dv_g,
\een
where, $h,f,\pi$ are given by \eqref{coeffsEL}. We endow $H^1(M)$ with the following scalar product: for any $u,v \in H^1(M)$,
\ben \label{psH1}
\langle u, v \rangle_h = \int_M \left(  \langle \nabla u,  \nabla v \rangle_g + h uv \right) dv_g.
\een
For any $J \in H^1(M)'$ we will denote by $(\triangle_g +h)^{-1}(J)$ the unique element  of $H^1(M)$ such that for any $v \in H^1(M)$ there holds:
\[ J(v) = \langle (\triangle_g +h)^{-1}(J), v \rangle_h. \]
For $t >0$ and $p \in \overline{ B_0(1)}$ we define two sequences $(\delta_k(t))_k$ and $(y_k(p))_k$ by:
\ben \label{defdkyk}
\left \{
\bal
\delta_k(t) & = \mk t \\
y_k(p) & = \exk (\mk p) ,\\
\eal
\right.
\een
where $\mk$ is as in \eqref{defmk} and $\xi_k$ is as in \eqref{defxik}. For the sake of clarity, in the computations below, the dependence in $t$ and $p$ in $\delta_k$ and $y_k$ may be omitted in the expressions since no ambiguity will occur. We let $r_0 >0$ be such that $r_0 < i_{g_\xi}(M)$ for all $\xi \in M$,  where $i_{g_\xi}$ denotes the injectivity radius of the metric $g_\xi$ given by \eqref{metconforme}. Up to choosing $k_0$ large enough in \eqref{defPsi0}, by \eqref{condrk} there holds $2r_k < r_0$ for any $k$.  We let $\chi \in C^\infty(\RR)$ be such that $\chi \equiv 1$ in $B_0(1)$ and $\chi \equiv 0$ outside of $B_0(2)$. The blow-up profiles we investigate in this work are given by the following expression: for $t >0$ and $p \in \overline{ B_0(1)}$, and for any $x \in M$:
\ben \label{bulle}
W_{k,t,p}(x) = \Lambda_{y_k}(x) \chi \left( \frac{d_{g_{y_k}}(y_k,x)}{r_k}\right) \delta_k^{\frac{n-2}{2}} \left( \delta_k^2 + \frac{f(y_k)}{n(n-2)}  d_{g_{y_k}}(y_k,x)^2 \right)^{1 - \frac{n}{2}},
\een
where $\Lambda_{y_k}$ is as in \eqref{propLambda} and $\delta_k$ and $y_k$ are given by \eqref{defdkyk}. These profiles are localized in $B_{y_k}(2r_k)$, which denotes here the geodesic ball of radius $2 r_k$ with respect to the metric $g_{y_k}$. 
For a given $k$ we let $V_{0,k}, \cdots, V_{n,k}: \RR^n \to \RR$ be given by:
\ben \label{defVki}
\bal
V_{0,k}(x) & =  \left( \frac{f(y_k)}{n(n-2)}|x|^2 - 1 \right) \left( 1 + \frac{f(y_k)}{n(n-2)}|x|^2\right)^{-\frac{n}{2}} \\
V_{i,k}(y) & = f(y_k) x_i \left( 1 + \frac{f(y_k)}{n(n-2)}|x|^2 \right)^{-\frac{n}{2}} ,\\
\eal
\een
and also define, for any $x \in M$, any $1 \le i \le n $ and any $k$:
\ben \label{defZk}
\bal
Z_{0,k,t,p}(x) & = \Lambda_{y_k}(x)  \chi \left( \frac{d_{g_{y_k}}(y_k,x)}{r_k}\right) \delta_k^{\frac{n-2}{2}}  \left(  \delta_k^2 + \frac{f(y_k)}{n(n-2)}  d_{g_{y_k}}(y_k,x)^2 \right)^{- \frac{n}{2}} \\
& \times \left( \frac{f(y_k)}{n(n-2)}d_{g_{y_k}}(y_k,x)^2 - \delta_k^2 \right) \\
Z_{i,k,t,p}(x) &=  \Lambda_{y_k}(x)  \chi \left( \frac{d_{g_{y_k}}(y_k,x)}{r_k}\right)  \delta_k^{\frac{n}{2}} \left(  \delta_k^2 + \frac{f(y_k)}{n(n-2)}  d_{g_{y_k}}(y_k,x)^2 \right)^{- \frac{n}{2}} \\
& \times f(y_k)\left \langle \left( \textrm{exp}_{y_k}^{g_{y_k}}\right)^{-1}(x), e_i(y_k) \right\rangle_{g_{y_k}(y_k)}.
\eal
\een
In \eqref{defZk}, the $(e_i)_i$ denote the field of orthonormal basis introduced in the beginning of Section \ref{setting}. Finally, we let 
\ben \label{noyau}
K_{k,t,p} = \textrm{Span} \left \{ Z_{i,k,t,p}, i=0 \dots n \right \}.
\een
Since $(V_0, \cdots, V_n)$ forms an orthonormal family for the scalar product $(u,v) = \int_{\RR^n} \langle \nabla u, \nabla v \rangle dx$ in $\RR^n$, $K_{k,t,p}$ is $(n+1)$-dimensional for $k$ large enough and the $Z_{i,k,t,p}$ are ``almost'' orthogonal. We denote by $K_{k,t,p}^{\perp}$ its orthogonal in $H^1(M)$ for the scalar product given by \eqref{psH1}.

\medskip

 We construct solutions of \eqref{ELeps} of the form $u_0 + W_{k,t,p}$.  It is easily seen that thanks to the truncation function $\eta$ defined in \eqref{defetaeps} the mapping 
\[ G: u \in H^1(M) \mapsto \int_M \frac{a}{\eta(u)^{2^*-1}} dv_g\]
 is of subcritical type, in the following sense: for any sequences $(u_l)_l, (v_l)_l$ and $(w_l)_l$ weakly converging in $H^1(M)$ towards $u,v$ and $w$, there holds: 
 \[ D^2G(u_l)(v_l,w_l) \to D^2G(u)(v,w) \]
 as $l \to + \infty$. Therefore, since $u_0$ is a smooth positive strictly stable solution of \eqref{ELeps}, the general finite-dimensional reduction theorem stated in Robert-V\'etois \cite{RobertVetois} applies and yields the following:
 \begin{prop} \label{generalreduction}
 Let $k \in \mathbb{N}$. There exists $M_k > 0$, a $C^1$ mapping $\phi_k : (0,M_k) \times \overline{B_0(1)} \to  H^1(M) $ and $C^1$ functions $\lambda_{i,k}: (0,M_k) \times \overline{B_0(1)} \to \RR$, $0 \le i \le n$, such that the function given by
 \ben \label{defuk}
  u_{k,t,p} = u_0 + W_{k,t,p} + \phi_k(t,p) 
  \een
   satisfies, for any $(t, p) \in (0, M_k) \times \overline{B_0(1)}$:
 \ben \label{soloutnoyau}
u_{k,t,p} - \left( \triangle_g + h \right)^{-1} \left( f u_{k,t,p}^{2^*-1} + \frac{\pi^2}{u_{k,t,p}^{2^*+1}}\right)  = \lambda_{0,k}(t,p) Z_{0,k,t,p} + \sum_{i = 1}^n \lambda_{i,k}(t,p) Z_{i,k,t,p},
 \een
 where $W_{k,t,p} $ is as in \eqref{bulle}  and the $Z_{i,k,t,p}$ are as in \eqref{defZk}. In addition there holds that $\lambda_{i,k}(t,p) = 0$ for all $0 \le i \le n$ -- and hence
$u_{k,t,p}$ is a solution of \eqref{ELeps} -- if and only if $(t, p)$ is a critical point of the mapping $(s,q) \mapsto J(u_{k,s,q})$, where $J$ is as in \eqref{defJ}. 
 \end{prop}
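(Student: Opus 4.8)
The plan is to apply the abstract variational Lyapunov--Schmidt reduction of Robert--V\'etois \cite{RobertVetois}: the substance of the proof is to verify its hypotheses for \eqref{ELeps} near the approximate solution $u_0+W_{k,t,p}$, and the only genuinely delicate ingredient is a uniform coercivity estimate for a linearized operator. First I would rewrite \eqref{ELeps} in fixed-point form as $F_k(u)=0$, where
\[
F_k(u)=u-\left(\triangle_g+h\right)^{-1}\!\left(f(u^+)^{2^*-1}+\frac{\pi^2}{\eta(u)^{2^*+1}}\right),
\]
and note that $F_k=\nabla J$ for the scalar product $\langle\cdot,\cdot\rangle_h$ of \eqref{psH1}, so that the solutions of \eqref{ELeps} are exactly the critical points of the functional $J$ of \eqref{defJ}. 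The truncation $\eta=\eta_\ve$ makes $F_k$ well defined and smooth on all of $H^1(M)$ and renders the negative-power term a nonlinearity of subcritical type in the sense recalled above. For each $(t,p)$ one splits $H^1(M)=K_{k,t,p}\oplus K_{k,t,p}^\perp$ along \eqref{noyau}, and looks for $u=u_0+W_{k,t,p}+\phi$ with $\phi\in K_{k,t,p}^\perp$.

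The auxiliary (infinite-dimensional) equation is $\Pi_{K_{k,t,p}^\perp}F_k(u_0+W_{k,t,p}+\phi)=0$. Expanding $\Pi_{K_{k,t,p}^\perp}F_k(u_0+W_{k,t,p}+\phi)=L_{k,t,p}\phi+\Pi_{K_{k,t,p}^\perp}F_k(u_0+W_{k,t,p})+\mathcal{N}_{k,t,p}(\phi)$, with $\mathcal{N}_{k,t,p}$ at least quadratic in $\phi$ and $L_{k,t,p}=\Pi_{K_{k,t,p}^\perp}\circ DF_k(u_0+W_{k,t,p})\big|_{K_{k,t,p}^\perp}$, everything reduces to the invertibility of $L_{k,t,p}$ together with a bound $\|L_{k,t,p}\phi\|_{H^1}\ge c\|\phi\|_{H^1}$ uniform in $k$ large and in $(t,p)\in(0,M_k)\times\overline{B_0(1)}$; a contraction argument (or the implicit function theorem, using the $C^1$ dependence of all data on the parameters) then produces the $C^1$ map $(t,p)\mapsto\phi_k(t,p)$. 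This is the step where the structure of the equation enters: where the bubble $W_{k,t,p}$ is negligible, $L_{k,t,p}$ is a small perturbation of $DF_k(u_0)$, an isomorphism by the strict stability \eqref{u0nondeg}; on the concentration scale, after rescaling by $x\mapsto\exp_{y_k}^{g_{y_k}}(\delta_k x)$, the principal part of $L_{k,t,p}$ is the linearization of the Euclidean critical equation around a standard bubble, whose kernel in $\mathcal{D}^{1,2}(\RR^n)$ is spanned by the functions $V_{0,k},\dots,V_{n,k}$ of \eqref{defVki} --- precisely the directions removed in \eqref{noyau} --- so it is invertible on the orthogonal complement; and the singular term $\pi^2/\eta(u)^{2^*+1}$ contributes to $DF_k$ only through a compact (subcritical) operator, since $\eta$ and, by Lemma \ref{minor}, any relevant $u$ are bounded below, so it does not spoil the Fredholm picture. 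A standard blow-up/contradiction argument then glues these two regimes into the required uniform lower bound.

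It remains to set up the bifurcation equation and prove the variational characterization. By construction $F_k(u_{k,t,p})\in K_{k,t,p}$, so it can be written uniquely as $\lambda_{0,k}(t,p)Z_{0,k,t,p}+\sum_{i=1}^n\lambda_{i,k}(t,p)Z_{i,k,t,p}$ --- uniqueness and the $C^1$ dependence of the $\lambda_{i,k}$ on $(t,p)$ following from the almost-orthogonality of the $Z_{i,k,t,p}$ recorded after \eqref{noyau} --- and $u_{k,t,p}$ solves \eqref{ELeps} if and only if $\lambda_{i,k}(t,p)=0$ for all $i$. For the equivalence with criticality of $\mathcal{J}_k(t,p):=J(u_{k,t,p})$, differentiate: for $\sigma\in\{t,p_1,\dots,p_n\}$,
\[
\partial_\sigma\mathcal{J}_k(t,p)=\big\langle F_k(u_{k,t,p}),\partial_\sigma u_{k,t,p}\big\rangle_h=\sum_{i=0}^n\lambda_{i,k}(t,p)\,\big\langle Z_{i,k,t,p},\partial_\sigma W_{k,t,p}+\partial_\sigma\phi_k(t,p)\big\rangle_h .
\]
Since $\phi_k(t,p)\perp K_{k,t,p}$ one has $\langle Z_{i,k,t,p},\partial_\sigma\phi_k\rangle_h=-\langle\partial_\sigma Z_{i,k,t,p},\phi_k\rangle_h$, which is of lower order because $\|\phi_k\|_{H^1}$ is small, while the matrix $\big(\langle Z_{i,k,t,p},\partial_\sigma W_{k,t,p}\rangle_h\big)_{i,\sigma}$ is non-degenerate uniformly in $k$ --- essentially diagonal with non-zero entries in the rescaled limit, by a parity argument. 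Hence $\nabla_{(t,p)}\mathcal{J}_k(t,p)=0$ if and only if $\lambda_{i,k}(t,p)=0$ for all $i$, which is the claimed equivalence. All of this is what \cite{RobertVetois} packages once $u_0$ is a smooth positive strictly stable solution and the extra nonlinearity is of subcritical type; the main obstacle is exactly the uniform coercivity of $L_{k,t,p}$ --- controlling the interaction between the $u_0$-part and the bubble-part of the linearization while ruling out the singular nonlinearity from interfering --- which is the reason the truncation $\eta$ and the strict stability of $u_0$ were built into the setup.
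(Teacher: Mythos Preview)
Your proposal is correct and takes essentially the same approach as the paper: the paper does not give an independent proof but simply notes that $u_0$ is a smooth positive strictly stable solution of \eqref{ELeps} and that the truncated negative-power term is of subcritical type, and then invokes the general reduction theorem of Robert--V\'etois \cite{RobertVetois}. Your write-up is a faithful (and more detailed) unpacking of exactly that citation.
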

\noindent Note that the mapping $\phi_k$ given in Proposition \ref{generalreduction} satisfies in addition, for all $(t,p) \in (0, M_k) \times \overline{B_0(1)}$:
\ben \label{propphik} 
 \phi_k(t,p) \in K_{k,t,p}^{\perp}  \quad \textrm{ and } \quad \Vert \phi_k(t,p) \Vert_{H^1} = O \left( \Vert R_{k,t,p} \Vert_{L^{\frac{2n}{n+2}}} \right),
\een
 where $K_{k,t,p}$ is as in \eqref{noyau} and where we have let
 \ben \label{erreur}
R_{k,t,p} = \left( \triangle_g + h \right)(u_0 + W_{k,t,p}) - f \left( u_0 + W_{k,t,p} \right)^{2^*-1} - \frac{\pi^2}{(u_0 + W_{k,t,p})^{2^*+1}} .
 \een 
 Note also that there holds $\eta(u_0 + W_{k,t,p}) = u_0 + W_{k,t,p}$ by the choice of $\eta$, hence the definition of $R_{k,t,p}$ makes sense. The proof of Proposition \ref{generalreduction} is in Robert-V\'etois \cite{RobertVetois}. Among the abundant literature, other possible references for the Lyapunov-Schmidt finite-dimensional reduction method are Ambrosetti-Malchiodi \cite{AmbrosettiMalchiodi}, Rey \cite{Rey}, Berti-Malchiodi \cite{BertiMalchiodi}, Del Pino-Musso-Pacard \cite{DelPinoMussoPacard}, Del Pino-Musso-Pacard-Pistoia \cite{DelPinoMussoPacardPistoia1, DelPinoMussoPacardPistoia2}, Lin-Ni-Wei \cite{LinNiWei}, Malchiodi-Ni-Wei \cite{MalchiodiNiWei}, Micheletti-Pistoia \cite{MichelettiPistoia}, Musso-Pacard-Wei \cite{MussoPacardWei} and Wei \cite{WeiGM}.

\section{$C^1$-estimates for the reduced energy} \label{energy}

\noindent Let $k \in \mathbb{N}$. We define the following auxiliary function, for $t >0$ and $p \in \overline{B_0(1)}$:
\ben \label{energiereduite}
I_k(t,p) = J(u_0 + W_{k,t,p}),
\een
where $J$ is as in \eqref{defJ} and $W_{k,t,p}$ is as in \eqref{bulle}. The main result of this section is a $C^1$-uniform asymptotic expansion of $I_k$.

\begin{prop} \label{propenergie}
Assume $n \ge 7$. There holds:
\begin{itemize}
\item If $(M,g)$ is locally conformally flat:
\be 
\bal
 I_k(t,p) - J(u_0) - \frac{1}{n}K_n^{-n} = - \frac{1}{2^*} \int_{\RR^n} \Psi(p+ty) \left( 1 + \frac{f(\xi_0)}{n(n-2)} |y|^2 \right)^{-n} dy \cdot \vek \\
- (n-2)^{\frac{n}{2}} n^{\frac{n-2}{2}} \delta_k(t)^{\frac{n-2}{2}} + o(\vek),
\eal
\ee
\item If $(M,g)$ is not locally conformally flat:
\be
\bal
 I_k(t,p) - J(u_0) - \frac{1}{n}K_n^{-n} = - \frac{1}{2^*} \int_{\RR^n} \Psi(p+ty) \left( 1 + \frac{f(\xi_0)}{n(n-2)} |y|^2 \right)^{-n} dy \cdot \vek \\
- (n-2)^{\frac{n}{2}} n^{\frac{n-2}{2}}\delta_k(t)^{\frac{n-2}{2}}  - K_n^{-n}\frac{n(n-2)^2}{24(n-4)(n-6)} |W(\xi_0)|_g^2 \delta_k(t)^4 +  o(\vek),
\eal
\ee
\end{itemize}
as $k \to \infty$, where $\Psi$ is as in \eqref{defPsi}. In addition, this expansion holds in $C^1_{loc}\left( (0, +\infty) \times \overline{B_0(1)} \right)$ as $k \to + \infty$. 
\end{prop}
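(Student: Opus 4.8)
The plan is to compute $I_k(t,p) = J(u_0 + W_{k,t,p})$ by expanding each of the three integral terms in the functional $J$ defined in \eqref{defJ}, tracking all contributions up to order $o(\vek)$ and doing so in a way that respects $C^1$-dependence in $(t,p)$. First I would split the quadratic term: writing $J(u_0 + \Wa) = J(u_0) + \langle (\triangle_g + h) u_0 - f u_0^{2^*-1} + \pi^2/\eta(u_0)^{2^*+1}\,, \Wa\rangle + \tfrac12\langle\Wa,\Wa\rangle_h - \tfrac{1}{2^*}\int_M f\big((u_0+\Wa)^{2^*} - u_0^{2^*} - 2^* u_0^{2^*-1}\Wa\big) + \tfrac{1}{2^*}\int_M \pi^2\big(\eta(u_0+\Wa)^{-2^*} - \eta(u_0)^{-2^*} + 2^* \eta(u_0)^{-2^*-1}\Wa\big)$, and noting the linear term vanishes since $u_0$ solves \eqref{EL}. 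The negative-exponent term is smooth near $u_0 \equiv 1$ and, since $\Wa$ is concentrated in a shrinking ball of radius $O(r_k)$ with $\Vert\Wa\Vert_{L^1} = O(\da^{\frac{n-2}{2}} r_k^{\text{something}})$, one checks it contributes only $o(\vek)$ (using $\mk = o(r_k^3)$ and the definition \eqref{defmk} of $\mk$); the only genuinely new feature compared to the classical Yamabe-type reduction is that $f = 1 + \Psi_0$ is non-constant.

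\medskip

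Second, the main work is the expansion of $\tfrac12\langle\Wa,\Wa\rangle_h - \tfrac{1}{2^*}\int_M f\big((u_0+\Wa)^{2^*} - u_0^{2^*} - 2^* u_0^{2^*-1}\Wa\big)$. I would pass to $g_{y_k}$-conformal normal coordinates via \eqref{metconforme}--\eqref{propLambda}, so that $\Lambda_{y_k}\Wa$ is, up to the cutoff $\chi$, exactly the standard bubble $\db_{k}^{\frac{n-2}{2}}\big(\db_k^2 + \tfrac{f(y_k)}{n(n-2)}|y|^2\big)^{1-\frac n2}$ rescaled by the factor $f(y_k)$; the conformal covariance of $\triangle_g + c_n S_g$ together with \eqref{confnorm} converts the metric-dependent quadratic form into the flat one up to errors controlled by $|y|^N$ and by the Weyl tensor term $\tfrac{n(n-2)^2}{24(n-4)(n-6)}|W(\xi_0)|_g^2 \db_k^4$ in the non-l.c.f.\ case (this is exactly the Brendle--Marques computation, cf.\ \cite{Bre,BreMa}). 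The term $-(n-2)^{\frac n2} n^{\frac{n-2}{2}}\da^{\frac{n-2}{2}}$ arises from the interaction between $u_0$ and $\Wa$ in $\int_M f\,u_0^{2^*-2}\Wa^2$ (equivalently the $\Wa^{2^*-1}u_0$ cross term), using $\int_{\RR^n}(1+\tfrac{f}{n(n-2)}|y|^2)^{-\frac{n+2}{2}}dy$; here the factor $f(y_k) = f(\xi_0) + o(1)$ enters but only through its leading value since this term is already of lower order. Meanwhile the $\Psi_0$-part of $f = 1 + \Psi_0$ in $-\tfrac{1}{2^*}\int_M f\,\Wa^{2^*}$ produces, after rescaling $x = y_k(p) + \mk y$ and using \eqref{defPsi0}, \eqref{defdkyk}, exactly $-\tfrac{1}{2^*}\vek\int_{\RR^n}\Psi(p+ty)(1+\tfrac{f(\xi_0)}{n(n-2)}|y|^2)^{-n}dy + o(\vek)$ — the overlap between bubbles at distinct $\xi_j$, $j\neq k$, being negligible because $d_g(\xi_k,\xi_{k+1})\sim k^{-2}$ while $\mk r_k \to 0$ much faster by \eqref{condrk}.

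\medskip

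Third, I would upgrade everything to $C^1_{loc}$ in $(t,p)$. This is the step I expect to be the main obstacle, and it is also the reason (as the authors note in the introduction) that non-constant $f$ is unavoidable. The $(t,p)$-derivatives of $\Wa = W_{k,t,p}$ are, by \eqref{bulle} and \eqref{defdkyk}, comparable to $\tfrac{1}{\da}Z_{0,k,t,p}$ and $\tfrac{1}{\da}Z_{i,k,t,p}$ from \eqref{defZk} up to cutoff-derivative terms supported in the annulus $\{r_k \le d_{g_{y_k}} \le 2r_k\}$, whose contribution is $O\big((\mk/r_k)^{n-2}\big) = o(\vek)$ by \eqref{condrk}. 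Differentiating the integral identities under the integral sign and re-running the same rescaling and cancellation arguments, one gets that $\partial_t$ and $\partial_p$ of each error term remain $o(\vek)$; the delicate point is that differentiating the $\int_M f \Wa^{2^*}$ term in $p$ hits $\Psi_0$ and produces $\nabla\Psi_0$, which one must show still yields a convergent rescaled integral $-\tfrac{1}{2^*}\vek\,\partial_p\big[\int_{\RR^n}\Psi(p+ty)(\cdots)^{-n}dy\big] + o(\vek)$ uniformly for $(t,p)$ in compact subsets of $(0,\infty)\times\overline{B_0(1)}$ — here the compact support of $\beta$ in \eqref{defPsi} and the smoothness of $\Psi$ are what make the derivative bounded. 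Collecting the leading terms $-\tfrac{1}{2^*}\vek\int_{\RR^n}\Psi(p+ty)(1+\tfrac{f(\xi_0)}{n(n-2)}|y|^2)^{-n}dy$, $-(n-2)^{\frac n2}n^{\frac{n-2}{2}}\da^{\frac{n-2}{2}}$, the Weyl term when applicable, the constant $\tfrac1n K_n^{-n}$ from $J(W)$ for the pure bubble, and $J(u_0)$, and absorbing all remainders into $o(\vek)$ in $C^1_{loc}$, gives the claimed expansion.
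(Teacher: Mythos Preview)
Your approach is essentially the paper's: split $J(u_0+W_{k,t,p})-J(u_0)$ into the quadratic part $I_1=\tfrac12\langle W,W\rangle_h$, the negative-power correction $I_2$, and the $f$-nonlinearity correction $I_3$; show $I_2=o(\vek)$ by an inner/outer decomposition; extract the Weyl contribution from $I_1$ via conformal normal coordinates (Esposito--Pistoia--V\'etois type expansion) and the $\Psi(p+ty)$ integral and the $\delta_k^{(n-2)/2}$ interaction term from $I_3$; then upgrade to $C^1_{loc}$ by differentiating each integral and rerunning the same estimates. Two small corrections to your write-up: the term $-(n-2)^{n/2}n^{(n-2)/2}\delta_k^{(n-2)/2}$ comes from $\int_M f\,u_0\,W_{k,t,p}^{2^*-1}\,dv_g$, not from $\int_M f\,u_0^{2^*-2}W_{k,t,p}^2\,dv_g$ (the latter scales like $\delta_k^2$, which is a different order), so your two ``equivalent'' descriptions are not actually equivalent; and the $(t,p)$-derivatives of $W_{k,t,p}$ satisfy $\partial_t W_{k,t,p}=\tfrac{n-2}{2t}Z_{0,k,t,p}$ and $\partial_{p_i}W_{k,t,p}=\tfrac{1}{nt}Z_{i,k,t,p}+O(\delta_k W_{k,t,p})+\text{cutoff terms}$, i.e.\ they are $O(1)$ multiples of the $Z_{i,k,t,p}$, not $O(1/\delta_k)$ multiples.
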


\begin{proof}
It is easily seen, since $u_0$ solves \eqref{ELeps}, that there holds, for $(t,p) \in (0, +\infty) \times \overline{B_0(1)}$:
\ben \label{propener1}
\bal
J(u_0+W_{k,t,p}) & = J(u_0) + I_1 + I_2 - I_3,
\eal
\een
where we have let:
\ben \label{propener2}
\bal
I_1 &=  \frac{1}{2} \int_M \left(|\nabla W_{k,t,p}|^2 + h W_{k,t,p}^2 \right), \\
I_2 & =  \frac{1}{2^*} \int_M \pi^2 \left( (u_0+W_{k,t,p})^{-2^*} - u_0^{-2^*} + 2^* u_0^{-2^*-1} W_{k,t,p} \right)dv_g, \\
I_3 & = \frac{1}{2^*} \int_M f \left( (u_0 + W_{k,t,p})^{2^*} - u_0^{2^*} - 2^* u_0^{2^*-1} W_{k,t,p} \right)dv_g.
\eal
\een
We first compute $I_2$. The integral is localized on $B_{y_k}(2 r_k)$, the geodesic ball for the metric $g_{y_k}$, where $y_k$ is as in \eqref{defdkyk}. There holds, on the one hand:
\ben \label{propener3}
\bal
& \int_{B_{y_k}(\delta_k(t)^{\frac{1}{2}})}  \pi^2 \left( (u_0+W_{k,t,p})^{-2^*} - u_0^{-2^*} + 2^* u_0^{-2^*-1} W_{k,t,p} \right)dv_g \\
&= \int_{B_0(\delta_k(t)^{\frac{1}{2}})} \pi^2 \left( (u_0+W_{k,t,p})^{-2^*} - u_0^{-2^*} + 2^* u_0^{-2^*-1} W_{k,t,p} \right) \left( \Theta_k(y) \right) \\
& \quad \qquad \qquad \qquad \qquad \qquad \qquad \qquad \qquad  \times \left( \Lambda_{y_k}^{-2^*} \sqrt{|g_{y_k}|} \right)(\Theta_k(y))dy, \\
\eal
\een
where we have let, for $y \in B_0(2 r_k)$:
\ben \label{defThetak}
 \Theta_k(y) = \textrm{exp}_{y_k}^{g_{y_k}} (y),
\een
where $|g_{y_k}|$ denotes the quantity $\left| (\textrm{exp}_{y_k}^{g_{y_k}} )^* g_{y_k} \right|$ and where $y_k$ is as in \eqref{defdkyk}. As a first remark, \eqref{defdkyk} and standard smoothness arguments for the exponential map show that there always holds, for $y \in B_0(2 r_k)$:
 \ben \label{derThetak}
 \left| \frac{\partial}{\partial p_i} \Theta_k(y) \right| = O(\mk).
 \een
Also, the $C^1$ uniformity with respect to $\xi$ of the expansion \eqref{confnorm} shows that there holds, for fixed $y \in B_0(2 r_k)$ and for $1 \le i \le n$:
 \ben \label{propener8}
 \frac{\partial}{\partial p_i} \left( \sqrt{|g_{y_k}|}\right)(\Theta_k(y)) = O(|y|^N) 
 \een
 for $N$ large enough. And by the choice of $y_k$ in \eqref{defdkyk} and by \eqref{propLambda} there holds, for $y \in B_0(2 r_k)$:
\ben \label{propener7b}
  \left| \frac{\partial}{\partial p_i} \Lambda_{y_k}(y) \right| = O(\mk) \textrm{ and } \nabla \Lambda_{y_k} \left( \Theta_k(y) \right) = O(|y|).
\een
For $y \in B_0(\delta_k(t)^{\frac{1}{2}})$ we have that:
\ben \label{propener4}
 \left| (u_0+W_{k,t,p})^{-2^*} - u_0^{-2^*} + 2^* u_0^{-2^*-1} W_{k,t,p} \right| (\Theta_k(y)) = O \left( W_{k,t,p}(\Theta_k(y)) \right).
 \een
Let now $1 \le i \le n$. Using the computations given in \eqref{claimreste10} and \eqref{claimreste11} below, and since by \eqref{defZk} we have $|Z_{i,k,t,p}| = O(W_{k,t,p})$ and $|Z_{0,k,t,p}| = O(W_{k,t,p})$, we can write that:
\ben \label{propener5}
\bal
& \left| \frac{\partial}{\partial t } \left( (u_0+W_{k,t,p})^{-2^*} - u_0^{-2^*} + 2^* u_0^{-2^*-1} W_{k,t,p}\right) \right|(\Theta_k(y)) = O \left( W_{k,t,p} (\Theta_k(y)) \right) \textrm{ and }\\
& \left| \frac{\partial}{\partial p_i } \left( (u_0+W_{k,t,p})^{-2^*} - u_0^{-2^*} + 2^* u_0^{-2^*-1} W_{k,t,p}\right) \right|(\Theta_k(y)) = O \left( W_{k,t,p} (\Theta_k(y)) \right).
\eal
\een
It is easily seen with \eqref{condrk} and \eqref{bulle} that there holds in $M$:
\ben \label{contnablaW}
 |\nabla W_{k,t,p}| = O \left( \frac{1}{\delta_k} W_{k,t,p} \right),
 \een
so that there holds in turn, for $ 1 \le i \le n$ and for $|y| \le \delta_k(t)^{\frac{1}{2}}$:
 \ben \label{propener7}
   \bal
    \Bigg| \nabla &\left( (u_0+W_{k,t,p})^{-2^*} - u_0^{-2^*} + 2^* u_0^{-2^*-1} W_{k,t,p} \right) \Bigg|\left( \Theta_k(y) \right) \\
    & \qquad \qquad \qquad \qquad = O \left( \frac{1}{\delta_k} W_{k,t,p}(\Theta_k(y)) \right).
 \eal
 \een
In the end, gathering \eqref{derThetak}, \eqref{propener8},  \eqref{propener7b}, \eqref{propener4}, \eqref{propener5} and \eqref{propener7} we obtain with \eqref{propener3} that there holds 
\ben \label{propener9}
\int_{B_{y_k}(\delta_k(t)^{\frac{1}{2}})}  \pi^2 \left( (u_0+W_{k,t,p})^{-2^*} - u_0^{-2^*} + 2^* u_0^{-2^*-1} W_{k,t,p} \right)dv_g = O \left( \delta_k^{\frac{n}{2}}\right),
\een
and that this expansion holds in $C^1_{loc} ((0,+\infty) \times \overline{B_0(1)}$. On the other hand, using \eqref{bulle}, if $\delta_k(t)^{\frac{1}{2}} \le |y| \le 2r_k$ we have that 
\ben \label{propener10}
 \left| (u_0+W_{k,t,p})^{-2^*} - u_0^{-2^*} + 2^* u_0^{-2^*-1} W_{k,t,p} \right| (\Theta_k(y)) = O \left( W_{k,t,p}^2(\Theta_k(y)) \right).
\een
Using \eqref{bulle}, \eqref{claimreste10} and \eqref{claimreste11} below there also holds that:
\ben \label{propener11}
\bal
& \left| \frac{\partial}{\partial t } \left( (u_0+W_{k,t,p})^{-2^*} - u_0^{-2^*} + 2^* u_0^{-2^*-1} W_{k,t,p}\right) \right|(\Theta_k(y)) = O \left( W_{k,t,p} ^2(\Theta_k(y)) \right) \\ 
& \left| \frac{\partial}{\partial p_i } \left( (u_0+W_{k,t,p})^{-2^*} - u_0^{-2^*} + 2^* u_0^{-2^*-1} W_{k,t,p}\right) \right|(\Theta_k(y)) = O \left( W_{k,t,p}^2(\Theta_k(y)) \right),
\eal
\een
and using \eqref{contnablaW} we get that:
 \ben \label{propener12}
   \bal
     \Bigg| \nabla &\left( (u_0+W_{k,t,p})^{-2^*} - u_0^{-2^*} + 2^* u_0^{-2^*-1} W_{k,t,p} \right) \Bigg|\left( \Theta_k(y) \right) \\
    & \qquad \qquad \qquad \qquad = O \left( \frac{1}{\delta_k} W_{k,t,p}^2(\Theta_k(y)) \right).
  \eal
 \een
 Combining \eqref{propener10}, \eqref{propener11}, \eqref{propener12}, \eqref{derThetak}, \eqref{propener8} and \eqref{propener7b} gives in the end that
 \ben \label{propener13}
\bal
  \int_{B_{y_k}(2r_k) \backslash B_{y_k}(\delta_k(t)^{\frac{1}{2}})}  \pi^2 \Big( (u_0+W_{k,t,p})^{-2^*} - u_0^{-2^*} +  & 2^* u_0^{-2^*-1} W_{k,t,p} \Big) dv_g \\
  & = O \left( \delta_k^{\frac{n}{2}}\right)
\eal
 \een
 uniformly in $C^1_{loc} ((0,+\infty) \times \overline{B_0(1)})$. In the end, \eqref{propener9}, \eqref{propener13} and \eqref{defmk} therefore show that:
 \ben \label{propener14}
I_2 = o(\vek)
 \een
uniformly in  $C^1_{loc} ((0,+\infty) \times \overline{B_0(1)})$. To obtain \eqref{propener14} 
we used in particular that, by \eqref{defmk}, there holds for any $n \ge 7$ and for $t$ in compact subsets of $(0,+\infty)$, that 
\ben \label{controledeltak}
\left \{ \bal
\delta_k(t)^{\frac{n-2}{2}} & = O(\vek)  \textrm{ and } \\
\delta_k(t)^4 &= O(\vek) \textrm{ if } (M,g) \textrm{ is not l.c.f.}. \\
\eal \right.
\een
We now compute $I_1$ given by \eqref{propener2}. Using \eqref{coeffsEL} and \eqref{defPsi0} it is easily seen that there holds, uniformly in $C^0_{loc}( (0,+\infty) \times \overline{B_0(1)})$:
\ben \label{propener15}
\int_M \left( h - c_n S_g \right) W_{k,t,p}^2 dv_g = O (\vek^2).
\een
Using the conformal covariance property of the conformal laplacian it is easily seen that there holds, uniformly in compact subsets of $(0,+\infty) \times \overline{B_0(1)}$ (See for instance Esposito-Pistoia-V\'etois \cite{EspositoPistoiaVetois}):
\begin{itemize}
\item if $(M,g)$ is locally conformally flat:
\ben \label{propener16}
\frac{1}{2}\int_M \left( |\nabla W_{k,t,p}|^2 + c_n S_g W_{k,t,p}^2 \right)dv_g = \frac{1}{2} K_n^{-n} f(y_k)^{1- \frac{n}{2}} + O \left( \left( \frac{\delta_k}{r_k}\right)^{n-2} \right) 
\een
\item If $(M,g)$ is not locally conformally flat:
\ben \label{propener17}
\bal
\frac{1}{2}\int_M \left( |\nabla W_{k,t,p}|^2 + c_n S_g W_{k,t,p}^2 \right)dv_g & =  \frac{1}{2} K_n^{-n} f(y_k)^{1- \frac{n}{2}} + O \left( \left( \frac{\delta_k}{r_k}\right)^{n-2} \right) \\
- K_n^{-n} f(y_k)^{-1 - \frac{n}{2}} \frac{n(n-2)^2}{24(n-4)(n-6)}& |W(y_k)|_g^2 \delta_k(t)^4 + o(\delta_k(t)^4)  ,
\eal
\een
\end{itemize}
where in \eqref{propener16} and \eqref{propener17} we have let 
\ben \label{defKn}
K_n = \left( \frac{4}{n(n-2) \omega_n^{\frac{2}{n}}} \right)^{\frac{1}{2}} ,
\een
where $\omega_n$ is the volume of the standard $n$-sphere. That the expansions \eqref{propener15}, \eqref{propener16} and \eqref{propener17} also hold true uniformly in $C^1_{loc} ((0,+\infty) \times \overline{B_0(1)})$ follows from the same arguments than those developed in the computations from \eqref{propener3} to \eqref{propener12}. The explicit expression of $f$ in \eqref{coeffsEL},  the definition of $y_k$ in \eqref{defdkyk}, and \eqref{condrk}, \eqref{defmk} and \eqref{controledeltak} show, with \eqref{propener15}, \eqref{propener16} and \eqref{propener17}, that there holds in the end, uniformly in $C^1_{loc} ((0,+\infty) \times \overline{B_0(1)})$ :
\begin{itemize}
\item if $(M,g)$ is locally conformally flat:
\ben \label{calculI1lcf}
I_1 = \frac{1}{2} K_n^{-n} - \frac{n-2}{4} \Psi(p) \vek + o(\vek), 
\een
\item if $(M,g)$ is not locally conformally flat:
\ben \label{calculI1nonlcf}
I_1 = \frac{1}{2}K_n^{-n} - \frac{n-2}{4} \Psi(p) \vek - K_n^{-n}\frac{n(n-2)^2}{24(n-4)(n-6)} |W(\xi_0)|_g^2 \delta_k(t)^4 + o(\vek),
\een
\end{itemize}
where $\Psi$ is as in \eqref{defPsi} and $\delta_k(t)$ is as in \eqref{defdkyk}. It now remains to compute the integral $I_3$ in \eqref{propener2}. Here again, this integral is localized in $B_{y_k}(2 r_k)$. If $\sqrt{\delta_k(t)} \le d_{g_{y_k}}(y_k,y) \le 2 r_k$ we have that 
\ben \label{propener18}
 \left|  (u_0 + W_{k,t,p})^{2^*} - u_0^{2^*} - 2^* u_0^{2^*-1} W_{k,t,p} \right|(y) = O \left( W_{k,t,p}^2(y) \right).
 \een
Independently, if $d_{g_{y_k}}(y_k,y) \le \sqrt{\delta_k(t)}$, we have that
\ben \label{propener19}
\left| (u_0+W_{k,t,p})^{2^*} - W_{k,t,p}^{2^*} - 2^* u_0 W_{k,t,p}^{2^*-1} \right|(y) = O \left( W_{k,t,p}^{2^*-2}(y)\right).
\een
Using \eqref{propener18} and \eqref{propener19} we therefore get that there holds
\ben \label{propener20}
\bal
I_3 =   \frac{1}{2^*}\int_{B_{y_k}(\delta_k(t)^{\frac{1}{2}})} f W_{k,t,p}^{2^*} dv_g + \int_{B_{y_k}(\delta_k(t)^{\frac{1}{2}})} f u_0 W_{k,t,p}^{2^*-1} dv_g +O \left( \delta_k(t)^{\frac{n}{2}} \right). \\ 
\eal
\een
Using \eqref{coeffsEL}, straightforward computations give that: 
\ben \label{propener21}
\bal
& \int_{B_{y_k}(\delta_k(t)^{\frac{1}{2}})} f u_0 W_{k,t,p}^{2^*-1} dv_g  \\
& \qquad \qquad = (n-2)^{\frac{n}{2}} n^{\frac{n-2}{2}} \omega_{n-1} u_0(\xi_0) \delta_k(t)^{\frac{n-2}{2}} + o \left(\delta_k(t)^{\frac{n-2}{2}} \right) \\
& \qquad \qquad =  (n-2)^{\frac{n}{2}} n^{\frac{n-2}{2}} \omega_{n-1} \delta_k(t)^{\frac{n-2}{2}} + o \left(\delta_k(t)^{\frac{n-2}{2}} \right) \\
\eal
\een
since we chose $u_0 \equiv 1$ in \eqref{ELbase}. Independently, using \eqref{coeffsEL} there holds that
\ben \label{propener22}
\bal
&\int_{B_{y_k}(\delta_k(t)^{\frac{1}{2}})} f W_{k,t,p}^{2^*} dv_g = f(y_k)^{- \frac{n}{2}} K_n^{-n} +  o \left( \delta_k^{\frac{n-2}{2}} \right) \\ 
& \qquad + \vek \int_{B_0\left(\delta_k(t)^{- \frac{1}{2}} \right)} \Psi \left( \textrm{exp}_{y_k}^{g_{y_k}} (\delta_k(t) y) \right) \left(1 + \frac{f(y_k)}{n(n-2)} |y|^2 \right)^{-n}  dy. 
\eal
\een
Using the definition of $y_k$ and $\delta_k$ as in \eqref{defdkyk} it is easily seen that there holds:
\ben \label{propener23}
\frac{1}{\mk}\left( \exk \right)^{-1} \left( \textrm{exp}_{y_k}^{g_{y_k}} (\delta_k(t) y) \right) \to p + ty
\een
in $C^1_{loc}(\RR^n)$, as $k \to + \infty$. Therefore, with \eqref{propener23}, Lebesgue's dominated convergence theorem and \eqref{coeffsEL}, equation \eqref{propener22} becomes:
\ben \label{propener24}
\bal
& \int_{B_{y_k}(\delta_k(t)^{\frac{1}{2}})} f W_{k,t,p}^{2^*} dv_g = K_n^{-n} - \frac{n}{2} \Psi(p) \vek \\
& + \int_{\RR^n} \Psi(p + t y) \left( 1 + \frac{f(\xi_0)}{n(n-2)} |y|^2\right) dy \cdot \vek + o(\vek) + o \left(  \delta_k(t)^{\frac{n-2}{2}} \right).
\eal
\een
Using again \eqref{controledeltak} and \eqref{condrk}, \eqref{propener21} and \eqref{propener24} yield in the end in \eqref{propener20} that:
\ben \label{propener25}
\bal
I_3 &= \frac{1}{2^*}K_n^{-n} - \frac{n-2}{4} \Psi(p) \vek + (n-2)^{\frac{n}{2}} n^{\frac{n-2}{2}} \omega_{n-1} \delta_k(t)^{\frac{n-2}{2}}\\
&+ \frac{1}{2^*}\int_{\RR^n} \Psi(p + t y) \left( 1 + \frac{f(\xi_0)}{n(n-2)} |y|^2\right) dy \cdot \vek + o(\vek),
\eal
\een
where $\Psi$ is as in \eqref{defPsi}. That expansion \eqref{propener25} holds here also in $C^1_{loc}((0,+\infty)\times \overline{B_0(1)})$ is again a consequence of the arguments developed in \eqref{propener3}--\eqref{propener12}. Gathering \eqref{propener14}, \eqref{calculI1lcf}, \eqref{calculI1nonlcf} and \eqref{propener25} in \eqref{propener1} 
therefore concludes the proof of Proposition \ref{propenergie}.
\end{proof}

\section{Error estimates} \label{reste}

\noindent Let $\phi_k$ be the function given by Proposition \ref{generalreduction}. The following proposition controls the error terms due to $\phi_k$ in the expansion of the energy.

\begin{prop} \label{propreste}
There holds:
\ben \label{DLreste}
J \Big( u_0 + W_{k,t,p} +  \phi_k(t,p) \Big) = I_k(t,p) + o(\vek)
\een
as $k \to \infty$, where $I_k(t,p)$ is defined in \eqref{energiereduite} and $J$ is defined in \eqref{defJ}. Furthermore, this expansion holds in $C^1_{loc}\left( (0, +\infty) \times \overline{B_0(1)} \right)$.
\end{prop}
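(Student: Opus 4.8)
The plan is to expand $J(u_0 + W_{k,t,p} + \phi_k(t,p))$ around $J(u_0 + W_{k,t,p})$ using a Taylor expansion in the $\phi_k$ direction, and to control each term by the smallness estimate \eqref{propphik} on $\|\phi_k(t,p)\|_{H^1}$. First I would write, using that $\eta(u_0+W_{k,t,p}) = u_0 + W_{k,t,p}$ and that the truncation $\eta$ makes the negative nonlinearity behave subcritically,
\[
J(u_0 + W_{k,t,p} + \phi_k) = J(u_0+W_{k,t,p}) + \langle R_{k,t,p}, \phi_k\rangle_{H^1(M)'} + \frac{1}{2} D^2 J(u_0+W_{k,t,p})[\phi_k,\phi_k] + o\big(\|\phi_k\|_{H^1}^2\big),
\]
where $R_{k,t,p}$ is as in \eqref{erreur}: this is just the definition of the error term, since $R_{k,t,p} = J'(u_0+W_{k,t,p})$ in $H^1(M)'$. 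The linear term is bounded by $\|R_{k,t,p}\|_{L^{\frac{2n}{n+2}}} \|\phi_k\|_{H^1} = O\big(\|R_{k,t,p}\|_{L^{\frac{2n}{n+2}}}^2\big)$ by \eqref{propphik}, and the quadratic terms are $O(\|\phi_k\|_{H^1}^2) = O\big(\|R_{k,t,p}\|_{L^{\frac{2n}{n+2}}}^2\big)$ as well, using the uniform boundedness of $D^2J$ on bounded sets (again relying on the subcriticality of $G$, exactly as in Proposition \ref{generalreduction}). So the whole problem reduces to establishing the $C^0$ estimate $\|R_{k,t,p}\|_{L^{\frac{2n}{n+2}}}^2 = o(\vek)$, together with its $C^1_{loc}$ counterpart.

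The core computation is therefore the estimate of the error term $R_{k,t,p}$ in the $L^{\frac{2n}{n+2}}$-norm. Here I would split $M = B_{y_k}(2r_k) \setminus B_{y_k}(r_k)$, $B_{y_k}(r_k)\setminus B_{y_k}(\sqrt{\delta_k})$, $B_{y_k}(\sqrt{\delta_k})$, and the region far from $y_k$, exactly as in the proof of Proposition \ref{propenergie}. On each piece, one uses: the defining ODE \eqref{ELbase} for $u_0$ and the fact that $W_{k,t,p}$ almost solves the model bubble equation for the conformal metric $g_{y_k}$ (this is where the conformal normal coordinates \eqref{confnorm}, the choice of $\Lambda_{y_k}$ in \eqref{propLambda}, and the normalization by $f(y_k)$ in \eqref{bulle} are used — they kill the leading-order error); the cutoff $\chi$ contributes an error concentrated on $B_{y_k}(2r_k)\setminus B_{y_k}(r_k)$ of size $O\big((\delta_k/r_k)^{(n-2)/2}\big)$ in the relevant norm; and the mismatch between $f$ and $f(y_k)$, together with $|\nabla\Psi_0|^2$ in $h$ and the negative term, contributes errors controlled by $\vek$ and $\delta_k$. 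The conditions \eqref{condrk} on $r_k$ versus $\mu_k$, the choice of $\mu_k$ in \eqref{defmk}, and the consequences \eqref{controledeltak} are precisely what guarantee that $\delta_k^{(n-2)/2}$, $(\delta_k/r_k)^{(n-2)/2}$, and the $\Psi_0$-induced errors are all $o(\sqrt{\vek})$, so that squaring gives $o(\vek)$. I expect the verification that $\|R_{k,t,p}\|_{L^{\frac{2n}{n+2}}} = o(\sqrt{\vek})$ — and in particular that the non-l.c.f. curvature terms and the choice $\mu_k = \vek^{1/4}$ in dimension $n \ge 10$ are compatible with this — to be the main obstacle, since this is the place where the dimension restriction $n \ge 7$ and all the scale relations are genuinely used.

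Finally, for the $C^1_{loc}$ statement, I would differentiate the reduced functional in $(t,p)$ and use that $\phi_k$ is $C^1$ in $(t,p)$ by Proposition \ref{generalreduction}. The key simplification is the variational characterization: since $\phi_k(t,p) \in K_{k,t,p}^\perp$ and $J'(u_{k,t,p})$ lies in $K_{k,t,p}$, the derivatives $\partial_t \phi_k$ and $\partial_{p_i}\phi_k$ only contribute through their $K_{k,t,p}$-components and the terms $\lambda_{i,k} Z_{i,k,t,p}$, all of which are themselves $o(\sqrt{\vek})$-controlled by the same error estimates — I would cite Robert–V\'etois \cite{RobertVetois} for the general form of this argument, and the differentiated analogues of \eqref{derThetak}–\eqref{propener12} for the pointwise control of $\partial_t W_{k,t,p}$ and $\partial_{p_i} W_{k,t,p}$. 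Combining the $C^0$ bound with these differentiated estimates yields \eqref{DLreste} uniformly in $C^1_{loc}\big((0,+\infty)\times\overline{B_0(1)}\big)$, which completes the proof.
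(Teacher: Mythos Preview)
Your $C^0$ argument is correct and matches the paper exactly: one writes $J(u_0+W_{k,t,p}+\phi_k) = I_k(t,p) + O\big(\|R_{k,t,p}\|_{L^{2n/(n+2)}}^2\big)$ via a second-order Taylor expansion, and the pointwise decomposition of $R_{k,t,p}$ you describe (conformal-Laplacian error, cutoff term, $f-f(y_k)$, curvature term in the non-l.c.f.\ case, etc.) is precisely what the paper does in \eqref{calcerr1}--\eqref{calcerr6}, leading to $\|R_{k,t,p}\|_{L^{2n/(n+2)}}^2 = o(\vek)$ in \eqref{calcerr9}.

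The $C^1$ part, however, has a genuine gap. You correctly observe that $\partial J(u_{k,t,p}) = \sum_j \lambda_{j,k}\,\langle Z_{j,k,t,p}, \partial W_{k,t,p} + \partial \phi_k\rangle_h$, and that the $\partial\phi_k$ contribution is $O(S_k \cdot \|\phi_k\|_{H^1}) = o(\vek)$. But this leaves $\partial J(u_{k,t,p}) = c_i \lambda_{i,k} + o(\vek)$, and you still have to compare $c_i\lambda_{i,k}$ with $\partial I_k(t,p)$. Knowing only that both are $o(\sqrt{\vek})$ gives a difference of $o(\sqrt{\vek})$, not $o(\vek)$. The missing ingredient is that $Z_{i,k,t,p}$ lies approximately in the kernel of the linearization at the bubble: with $L_k$ as in \eqref{defLk}, one has $\|L_k Z_{i,k,t,p}\|_{H^1} = o(\sqrt{\vek})$ (this is \eqref{claimreste35}), and this is what makes
\[
\big[DJ(u_{k,t,p}) - DJ(u_0+W_{k,t,p})\big](Z_{i,k,t,p}) \;=\; \langle \phi_k, L_k Z_{i,k,t,p}\rangle_h + o(\vek) \;=\; o(\sqrt{\vek})\cdot o(\sqrt{\vek}) \;=\; o(\vek),
\]
which is Claim~\ref{claimreste3} in the paper. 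Combined with Claim~\ref{claimreste1} (relating $\partial I_k$ to $DJ(u_0+W_{k,t,p})(Z_i)$) and Claim~\ref{claimreste2} (relating $\partial J(u_{k,t,p})$ to $\lambda_{i,k}$), this closes the loop and also upgrades the a priori bound on $\lambda_{i,k}$ from $o(\sqrt{\vek})$ to $O(\vek)$. Your citation of Robert--V\'etois is appropriate for the overall scheme, but the specific smallness of $L_k Z_{i,k,t,p}$ --- which uses \eqref{calcerr3}, \eqref{calcerr4}, the conformal covariance, and the curvature estimate exactly as in the $R_{k,t,p}$ computation --- is the step you need to make explicit; without it the $C^1$ conclusion does not follow.
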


\begin{proof}
Since $u_0$ is a solution of equation \eqref{ELeps}, straightforward computations using the expression of $J$ as in \eqref{defJ} and \eqref{propphik} show that there holds for any $(t,p) \in (0, +\infty) \times \overline{B_0(1)}$: 
\ben \label{correctionerreur}
J \left(u_0 + W_{k,t,p} + \phi_k(t,p) \right) = J(u_0 + W_{k,t,p}) + O \left( \Vert R_{k,t,p} \Vert_{L^{\frac{2n}{n-2}}}^2 \right),
\een
whrere $R_{k,t,p}$ is as in \eqref{erreur}. To compute $R_{k,t,p}$ we write that, since $u_0$ solves \eqref{ELeps}:
\ben \label{calcerr1}
\bal
R_{k,t,p} & = 
 f \left( u_0^{2^*-1} + W_{k,t,p}^{2^*-1} - (u_0+W_{k,t,p})^{2^*-1}\right)  + \left( \triangle_g + c_n S_g \right) W_{k,t,p} - f(y_k) W_{k,t,p}^{2^*-1} \\
& + \pi^2 \left( u_0^{-2^*-1} - (u_0 + W_{k,t,p})^{-2^*-1} \right) + \left( f(y_k) - f \right) W_{k,t,p}^{2^*-1} \\
& + \left( h - c_n S_g\right) W_{k,t,p} .
\eal
\een
By its definition in \eqref{bulle}, $W_{k,t,p}$ is supported in $B_{y_k}(2r_k)$ (the geodesic ball is taken with respect to the metric $g_{y_k}$). Hence there holds, using \eqref{coeffsEL}, that:
\ben \label{calcerr2}
\left| u_0^{2^*-1} + W_{k,t,p}^{2^*-1} - (u_0+W_{k,t,p})^{2^*-1} \right| = O \left( \min \left( W_{k,t,p}, W_{k,t,p}^{2^*-2} \right)\right),
\een
that
\ben \label{calcerr2b}
\left|  \pi^2 \left( u_0^{-2^*-1} - (u_0 + W_{k,t,p})^{-2^*-1} \right)  \right| = O \Big( \min \left(1, W_{k,t,p} \right) \Big),
\een
that
\ben \label{calcerr3}
\left|  f(y_k) - f  \right| W_{k,t,p}^{2^*-1} = O \left( \frac{\vek}{\mk} d_{g_{y_k}}(y_k,\cdot) W_{k,t,p}^{2^*-1} \right),
\een
and that
\ben \label{calcerr4}
 \left| h - c_n S_g \right| W_{k,t,p}  = O \left( \left( \frac{\vek}{\mk}\right)^2 W_{k,t,p} \right). 
\een
Because of the conformal invariance property of the conformal laplacian, by letting $\tilde{W}_k = \Lambda_{y_k}^{-1} W_{k,t,p}$ there holds:
\ben \label{confcovlap}
\left( \triangle_g + c_n S_g\right)W_{k,t,p}(x) = \Lambda_{y_k}^{2^*-1} \left( \triangle_{g_{y_k}} + c_n S_{g_{y_k}} \right)\tilde{W}_k.
\een
Since the function $\tilde{W}_k$ is radial in normal coordinates for the metric $g_{y_k}$, by \eqref{confnorm} and \eqref{condrk}, straightforward computations show that there holds:
\ben \label{calcerr5}
\triangle_{g_{y_k}} \tilde{W}_k (x) = f(y_k) \tilde{W}_k^{2^*-1} + O \left( \mk^{\frac{n-2}{2}}r_k^{-n} \textbf{1}_{r \ge r_k}\right) + O(\mk^{\frac{n-2}{2}} ),
\een
where we have let $r = d_{g_{y_k}}(y_k,x)$. If $(M,g)$ is locally conformally flat around $y_k$ then there holds $S_{g_{y_k}} \equiv 0$ in a neighborhood of $y_k$. If $(M,g)$ is not locally conformally flat, letting $r = d_{g_{y_k}}(y_k,x)$, 
there holds $S_{g_{y_k}} = O (r^2)$, so that:
\ben \label{calcerr6}
S_{g_{y_k}} W_{k,t,p} = O \left( r^2 W_{k,t,p}\right).
\een
Using estimates \eqref{calcerr2} to \eqref{calcerr6} and using \eqref{defdkyk}, straightforward computations show that:
\ben \label{calcerr7}
 \Vert R_{k,t,p} \Vert_{L^{\frac{2n}{n+2}}}^2 = O(\delta_k^{\frac{n+2}{2}}) + \alpha_{n,k} + O (\vek^2 ) + O \left( \left( \frac{\delta_k}{r_k} \right)^{n-2}\right)
 \een
for any $n \ge 7$, 
where we have let: 
\ben \label{defalphak}
 \alpha_{n,k} = \left \{
\bal
& 0 &\textrm{ if } (M,g) \textrm{ is l.c.f.}&\\
& O \left( \left( \frac{\delta_k}{r_k}\right)^{n-2} \right)  &\textrm{ if } 7 \le n \le 9 \textrm{ and } (M,g) \textrm{ is not l.c.f.}&\\ 
& O \left(\mk^8 \left| \ln \delta_k \right|^{\frac{6}{5}} \right)   &\textrm{ if } n = 10 \textrm{ and } (M,g) \textrm{ is not l.c.f.}&\\ 
& O(\delta_k^8)  &\textrm{ if } n \ge 11 \textrm{ and } (M,g) \textrm{ is not l.c.f.}& \\
\eal
\right.
\een
The expansion in \eqref{calcerr7}  
is uniform with respect to $(t,p)$ in compact subsets of $(0, +\infty) \times \overline{B_0(1)}$. With \eqref{defmk}, \eqref{defdkyk} and \eqref{controledeltak} we therefore obtain that there holds,
 for any $n \ge 7$ and uniformly in $(t,p) \in (0, +\infty) \times \overline{B_0(1)}$:
\ben \label{calcerr9}
\Vert R_{k,t,p} \Vert_{L^{\frac{2n}{n+2}}}^2 = o(\vek).
\een
With \eqref{correctionerreur}, \eqref{calcerr9} therefore proves the expansion \eqref{DLreste} uniformly in $C^0_{loc}( (0, +\infty) \times \overline{B_0(1)})$.

\medskip

\noindent We now prove that \eqref{DLreste} actually holds in $C^1_{loc}( (0, +\infty) \times \overline{B_0(1)})$. 
The proof goes through a series of Claims.

\begin{claim} \label{claimreste1}
For any $(t,p) \in (0, + \infty) \times \overline{B_0(1)}$ and for any $1 \le i \le n$ there holds:
\ben \label{eqclaim1}
\bal
\frac{\partial}{\partial t} \Big( J(u_0 + W_{k, t, p}) \Big)(t, p) & = \frac{n-2}{2t} DJ(u_0 + W_{k,t,p}) \left( Z_{0,k,t,p} \right) + o(\vek)\\
\frac{\partial}{\partial p_i}  \Big( J(u_0 + W_{k, t, p}) \Big)(t, p) & =  \frac{1}{n t} DJ(u_0 + W_{k,t,p}) \left(Z_{i,k,t,p} \right) + o(\vek) .\\
\eal
\een
As a consequence there holds, for any $0 \le i \le n$:
\be
DJ(u_0 + W_{k,t,p}) \left(Z_{i,k,t,p} \right) = O(\vek).
\ee
\end{claim}
\begin{proof}[Proof of Claim \ref{claimreste1}]
It is easily seen that there holds, for any $x \in M$:
\ben \label{claimreste10}
\frac{\partial}{\partial t} W_{k,t,p}(x) = \frac{n-2}{2t} Z_{0,k,t,p}(x),
\een
and therefore that the first expansion in Claim \ref{claimreste1} holds true. We now turn to the derivatives in $p$. Straightforward computations using \eqref{bulle} and the properties of the conformal factor $\Lambda$ given in \eqref{propLambda} show that there holds, for any $x \in M$ and $1 \le i \le n$:
\ben \label{claimreste11}
\frac{\partial}{\partial p_i} W_{k,t,p}(x) = \frac{1}{n t} Z_{i,k,t,p}(x) + O \left( \delta_k W_{k,t,p} \right) + O \left( \frac{\delta_k}{r_k} \chi' \left( \frac{d_{g_{y_k}}(y_k,x)}{r_k}\right) W_{k,t,p} \right).
\een
We have, thanks to \eqref{claimreste11} and to \eqref{defJ}, that:
\ben \label{claimreste12}
\bal
  &\frac{\partial}{\partial p_i}  \Big( J(u_0 + W_{k, t, p}) \Big)(t, p) -  \frac{1}{n t} DJ(u_0 + W_{k,t,p})  \left( Z_{j,k,t,p} \right) \\
 & = O \left( \int_M \delta_k W_{k,t,p} |R_{k,t,p}| dv_g \right) + O  \left( \int_M \frac{\delta_k}{r_k} \chi' \left( \frac{d_{g_{y_k}}(y_k,x)}{r_k}\right) W_{k,t,p} |R_{k,t,p}| dv_g\right),
 \eal
 \een
 where $R_{k,t,p}$ is as in \eqref{erreur}. 
Using the pointwise control on $R_{k,t,p}$ given by \eqref{calcerr1} and by the estimates \eqref{calcerr2} to \eqref{calcerr6} one gets that \eqref{claimreste12} rewrites as 
\ben \label{claimreste14}
 \frac{\partial}{\partial p_i}  \Big( J(u_0 + W_{k, t, p}) \Big)(t, p) -  \frac{1}{n t} DJ(u_0 + W_{k,t,p}) \left(Z_{i,k,t,p} \right) = o(\vek).
\een
In the end, \eqref{claimreste10} and \eqref{claimreste14} prove \eqref{eqclaim1}. Independently, the $C^1$-uniform expansion of the reduced energy given by Proposition \ref{propenergie} shows, using \eqref{controledeltak}, that for any $1 \le i \le n$ there holds:
\ben \label{claimreste15}
\bal
 \frac{\partial}{\partial t}  \Big( J(u_0 + W_{k, t, p}) \Big)(t, p) & = O(\vek) \textrm{ and } \\
  \frac{\partial}{\partial p_i}  \Big( J(u_0 + W_{k, t, p}) \Big)(t, p) & = O(\vek) \\
\eal
\een
as $k \to + \infty$. 
Combining this with \eqref{eqclaim1} concludes the proof of Claim \ref{claimreste1}.
\end{proof}

\begin{claim} \label{claimreste2}
Let $\lambda_{i,k}$ be the functions defined by Proposition \ref{generalreduction}. There holds, for $(t,p) \in (0, +\infty) \times \overline{B_0(1)}$ and for $1 \le i \le n$:
\ben
\bal
\frac{\partial}{\partial t} \Big( J(u_{k,t,p}) \Big)(t, p) & = \frac{n-2}{2t}  \Vert \nabla V_{0,k}\Vert_{L^2(\RR^n)}^2 \lambda_{0,k}(t,p) + o(S_k),\\
\frac{\partial}{\partial p_i}  \Big( J(u_{k,t,p} ) \Big)(t, p) & =  \frac{1}{n t} \Vert  \nabla V_{i,k} \Vert_{L^2(\RR^n)}^2 \lambda_{i,k}(t,p) + o(S_k),\\
\eal
\een
where $u_{k,t,p}$ is as in \eqref{defuk}, the $V_{i,k}$, $0 \le i \le n$ are as in \eqref{defVki} and where we have let:
\ben \label{defSk}
S_k = \sum_{i=0}^n |\lambda_{i,k}(t,p)|.
\een
\end{claim}

\begin{proof}[Proof of Claim \ref{claimreste2}]
Using equation \eqref{soloutnoyau} it is easily seen that there hold:
\ben \label{claimreste21}
\bal
& \frac{\partial}{\partial t} \Big( J(u_{k,t,p}) \Big)(t, p) \\
& = \left \langle  \lambda_{0,k}(t,p) Z_{0,k,t,p} + \sum_{j = 1}^n \lambda_{j,k}(t,p) Z_{j,k,t,p}, \frac{\partial W_{k,t,p}}{\partial t} + \frac{\partial \phi_k(t,p)}{\partial t}\right \rangle_{h}
\eal
\een
and, for $1 \le i \le n$:
\ben \label{claimreste21b}
\bal
& \frac{\partial}{\partial p_i} \Big( J(u_{k,t,p}) \Big)(t, p) \\
& = \left \langle  \lambda_{0,k}(t,p) Z_{0,k,t,p} + \sum_{j = 1}^n \lambda_{j,k}(t,p) Z_{j,k,t,p}, \frac{\partial W_{k,t,p}}{\partial p_i} + \frac{\partial \phi_k(t,p)}{\partial p_i}\right \rangle_{h}.
\eal
\een
Using the fact that $\phi_k(t,p) \in K_{k,t,p}^{\perp}$ (see \eqref{propphik}) we get that for any $0 \le j \le n$ and any $1 \le i \le n$ there holds:
\ben \label{claimreste21c}
\bal
\left \langle  Z_{j,k,t,p},  \frac{\partial \phi_k(t,p)}{\partial t} \right \rangle_{h} & = - \left \langle \frac{\partial Z_{j,k,t,p}}{\partial t}, \phi_k(t,p) \right \rangle_{h} \textrm{ and } \\
\left \langle  Z_{j,k,t,p},  \frac{\partial \phi_k(t,p)}{\partial p_i} \right \rangle_{h} & = - \left \langle \frac{\partial Z_{j,k,t,p}}{\partial p_i}, \phi_k(t,p) \right \rangle_{h}.
\eal
\een
Straightforward computations using \eqref{defZk} (see for instance Robert-V\'etois \cite{RobertVetois}) show that there holds, for $0 \le j \le n$ and $1 \le i \le n$:
\ben \label{claimreste22}
 \left \Vert  \frac{\partial Z_{j,k,t,p}}{\partial t} \right \Vert_{H^1(M)} +  \left \Vert  \frac{\partial Z_{j,k,t,p}}{\partial p_i} \right \Vert_{H^1(M)} = O(1),
 \een
 and this expression is uniform in $t$ and $p$ in compact subsets of $(0, + \infty) \times \overline{B_0(1)}$. By Proposition \ref{generalreduction} there holds $\Vert \phi_{k,t,p} \Vert_{H^1(M)} = O (\Vert R_{k,t,p} \Vert_{L^{\frac{2n}{n+2}}})$, where $R_{k,t,p}$ is as in \eqref{erreur}, so that with \eqref{calcerr7} and \eqref{claimreste22} equation \eqref{claimreste21c} gives:
\ben \label{claimreste23}
\bal
\left \langle  Z_{j,k,t,p},  \frac{\partial \phi_k(t,p)}{\partial t} \right \rangle_{h} & = o(1) \textrm{ and } \left \langle  Z_{j,k,t,p},  \frac{\partial \phi_k(t,p)}{\partial p_i} \right \rangle_{h} & = o(1) 
\eal
\een
as $k \to + \infty$. Using the definition of the $Z_{i,k,t,p}$ as in \eqref{defZk} it is easily seen that there holds for $0 \le i,j \le n$:
\ben \label{claimreste24}
\left \langle Z_{i,k,t,p}, Z_{j,k,t,p} \right\rangle_{h} = \delta_{ij} \Vert \nabla V_{i,k}\Vert_{L^2(\RR^n)}^2 + o(1),
\een
as $k \to + \infty$, where $V_{i,k}$ is as in \eqref{defVki}. Combining \eqref{claimreste10} and \eqref{claimreste11} with \eqref{claimreste24} we therefore obtain that for any $0 \le j \le n$ and $1 \le i \le n$ there holds:
\ben \label{claimreste25}
\bal
\left \langle Z_{j,k,t,p}, \frac{\partial W_{k,t,p}}{\partial t} \right \rangle_h & = \delta_{0j} \frac{n-2}{2t}\Vert \nabla V_{0,k} \Vert_{L^2(\RR^n)}^2 + o(1) \textrm{ and } \\
\left \langle Z_{j,k,t,p}, \frac{\partial W_{k,t,p}}{\partial p_i} \right \rangle_h & = \delta_{ij} \frac{1}{nt} \Vert \nabla V_{i,k} \Vert_{L^2(\RR^n)}^2 + o(1) \\
\eal
\een
as $k \to + \infty$. Claim \ref{claimreste2} follows from combining \eqref{claimreste23} and \eqref{claimreste25} with \eqref{claimreste21} and \eqref{claimreste21b}.
\end{proof}

\begin{claim} \label{claimreste3}
There holds, for any $0 \le i \le n$ and for any $(t,p) \in (0, +\infty) \times \overline{B_0(1)}$:
\be
DJ(u_{k,t,p}) \Big( Z_{i,k,t,p} \Big) = DJ ( u_0 + W_{k,t,p}) \Big( Z_{i,k,t,p}\Big) + o(\vek)
\ee 
as $k \to + \infty$, where $u_{k,t,p}$ is as in \eqref{defuk}, and this expansion is uniform with respect to $t$ and $p$ in $(0, +\infty) \times \overline{B_0(1)}$.
\end{claim}

\begin{proof}[Proof of Claim \ref{claimreste3}]
By the definition of $J$ as in \eqref{defJ} and since $n \ge 7$ there holds that for $0 \le i \le n$:
\ben \label{claimreste31}
\bal
& \left ( DJ(u_{k,t,p}) -  DJ ( u_0 + W_{k,t,p}) \right) \Big( Z_{i,k,t,p}\Big) =   \left \langle \phi_k(t,p), L_k Z_{i,k,t,p} \right \rangle_h \\
& + O \left( \int_M (u_0 + W_{k,t,p})^{2^*-3}\left| Z_{i,k,t,p} \right| \left|\phi_k(t,p) \right|^2 dv_g \right) \\
& +  O \left( \int_M \left| W_{k,t,p}^{2^*-2} - (u_0+W_{k,t,p})^{2^*-2} \right| \left| Z_{i,k,t,p} \right|  \left| \phi_k(t,p)\right| dv_g \right) \\
& + O \left( \int_M \left|\eta \left(u_0 + W_{k,t,p} + \phi_k(t,p) \right)^{-2^*-1} - \left( u_0 + W_{k,t,p} \right)^{-2^*-1} \right| \left| Z_{i,k,t,p}\right|dv_g\right),
\eal
\een
where we have let, for any $u \in H^1(M)$:
\ben \label{defLk}
L_k u = u  - \left( \triangle_g + h \right)^{-1} \left( (2^*-1)f W_{k,t,p}^{2^*-2} u \right).
\een
There holds $|Z_{j,k,t,p}| = O(W_{k,t,p})$ for all $0 \le j \le n$, so that a H\"{o}lder inequality, \eqref{propphik} and \eqref{calcerr9} give that:
\ben \label{claimplus1}
\int_M (u_0 + W_{k,t,p})^{2^*-3}\left| Z_{i,k,t,p} \right| \left|\phi_k(t,p) \right|^2 dv_g = o (\vek).
\een
Similarly, we can write that 
\ben \label{claimplus2}
\bal
 & \int_M \left| W_{k,t,p}^{2^*-2} - (u_0+W_{k,t,p})^{2^*-2} \right| \left| Z_{i,k,t,p} \right|  \left| \phi_k(t,p)\right| dv_g \\
 & \quad = O \left( \int_M \left| \theta_k  \right| \left| \phi_k(t,p) \right| dv_g \right),
\eal
\een
where $\theta_k$ is a function satisfying:
\ben \label{claimplus2a}
\left| \theta_k(y) \right| =  \left \{
\bal
& O( W_{k,t,p}^{2^*-2}) &\textrm{ if } d_{g_{y_k}}(y_k,y) \le \sqrt{\delta_k(t)}& \\
& O(W_{k,t,p}) &\textrm{ if } d_{g_{y_k}}(y_k,y) \ge \sqrt{\delta_k(t)} .& \\
\eal \right.
\een
Using \eqref{claimplus2a} in \eqref{claimplus2}, a H\"{o}lder inequality gives then, with \eqref{propphik}, \eqref{calcerr7}, \eqref{controledeltak} and \eqref{condrk}, that:
\ben \label{claimplus22}
\int_M \left| W_{k,t,p}^{2^*-2} - (u_0+W_{k,t,p})^{2^*-2} \right| \left| Z_{i,k,t,p} \right|  \left| \phi_k(t,p)\right| dv_g  = o(\vek).
\een
Finally, we write that:
\ben \label{claimplus3}
\bal
\left|Z_{i,k,t,p}\right|(y) \left |\eta \left(u_0 + W_{k,t,p} + \phi_k(t,p) \right)^{-2^*-1} - \left( u_0 + W_{k,t,p} \right)^{-2^*-1} \right|(y) \\
= \left \{
\bal
& O(\left|Z_{i,k,t,p}\right|) &\textrm{ if }  d_{g_{y_k}}(y_k,y) \le \sqrt{\delta_k(t)}& \\
& O \Big( \left| \phi_k(t,p)\right|(y) \left| Z_{i,k,t,p}\right|(y) \Big) &\textrm{ if } d_{g_{y_k}}(y_k,y) \ge \sqrt{\delta_k(t)}.& \\
\eal \right.
\eal
\een
A H\"{o}lder inequality using \eqref{claimplus3}, \eqref{defZk}, \eqref{propphik}, \eqref{calcerr7}, \eqref{controledeltak} and \eqref{condrk} gives then:
\ben \label{claimplus33}
 \int_M \left|\eta \left(u_0 + W_{k,t,p} + \phi_k(t,p) \right)^{-2^*-1} - \left( u_0 + W_{k,t,p} \right)^{-2^*-1} \right| \left| Z_{i,k,t,p}\right|dv_g = o(\vek).
\een
With \eqref{claimplus1}, \eqref{claimplus22} and \eqref{claimplus33}, equation \eqref{claimreste31} becomes then:
\ben \label{claimreste32}
\bal
\left ( DJ(u_{k,t,p}) -  DJ ( u_0 + W_{k,t,p}) \right) &\Big( Z_{i,k,t,p}\Big) =  o(\vek) \\
& + O \left( \Vert \phi_k(t,p) \Vert_{H^1(M)} \Vert L_k Z_{i,k,t,p} \Vert_{H^1(M)} \right) \\
\eal
\een
as $k \to + \infty$, uniformly in $t$ and $p$ in compact subsets of $(0, + \infty) \times \overline{B_0(1)}$. We now compute $\Vert L_k Z_{i,k,t,p} \Vert_{H^1(M)} $. There holds:
\ben \label{claimreste33}
\Vert L_k Z_{i,k,t,p} \Vert_{H^1(M)} = O \left( \left \Vert (\triangle_g +h) Z_{i,k,t,p} - (2^*-1)f W_{k,t,p}^{2^*-2} Z_{i,k,t,p} \right \Vert_{L^{\frac{2n}{n+2}}} \right).
\een
Using the conformal covariance property of the conformal laplacian \eqref{confcovlap} we get, using the expressions \eqref{coeffsEL} for the coefficients of \eqref{ELeps}, that
\be 
\bal
& (\triangle_g +h) Z_{i,k,t,p}  - (2^*-1)f W_{k,t,p}^{2^*-2} Z_{i,k,t,p}  = (h - c_n S_g) Z_{i,k,t,p} \\
& + (2^*-1) \left( f(y_k) - f \right) W_{k,t,p}^{2^*-2} Z_{i,k,t,p} + O \left( \mk^{\frac{n-2}{2}}r_k^{-n} \textbf{1}_{r \ge r_k} \right) \\
&+ \left \{
\bal
& 0 &\textrm{ if } (M,g) \textrm{ is l.c.f.} &\\
& O \left( r^2 Z_{i,k,t,p}\right) &\textrm{ otherwise} &\\
\eal \right.
\eal
\ee
where we have let $r = d_{g_{y_k}}(y_k, \cdot)$. Since there holds $|Z_{i,k,t,p}| = O(W_{k,t,p})$, equations \eqref{calcerr3} and \eqref{calcerr4} tell us with \eqref{claimreste33} that there holds:
\ben \label{claimreste35}
\Vert L_k Z_{i,k,t,p} \Vert_{H^1(M)} = O(\vek) + O \left( \left( \frac{\mk}{r_k} \right)^{\frac{n-2}{2}} \right) + \alpha_k^{\frac{1}{2}},
\een
where $\alpha_k$ is as in \eqref{defalphak}. Using \eqref{propphik}, \eqref{calcerr7} and \eqref{claimreste35} we then obtain that 
\[  \Vert \phi_k(t,p) \Vert_{H^1(M)} \Vert L_k Z_{i,k,t,p} \Vert_{H^1(M)} = o(\vek) \]
as $k \to + \infty$. With \eqref{claimreste32} this concludes the proof of Claim \ref{claimreste3}.
\end{proof}

We are now in position to finish the proof of Proposition \ref{DLreste}. First, note that as a consequence of \eqref{soloutnoyau} and of \eqref{claimreste24} we can write that there holds, for any $1 \le j \le n$:
\ben \label{conclureste1}
\bal
DJ(u_{k,t,p}) \Big( Z_{0,k,t,p} \Big) & =\Vert \nabla V_{0,k} \Vert_{L^2(\RR^n)}^2 \lambda_{0,k}(t,p) + o(S_k) \textrm{ and } \\
DJ(u_{k,t,p}) \Big( Z_{j,k,t,p} \Big) & = \Vert \nabla V_{j,k} \Vert_{L^2(\RR^n)}^2 \lambda_{i,k}(t,p) + o(S_k) \\
\eal
\een
for $k \to +\infty$, where $S_k$ is as in \eqref{defSk} and $u_{k,t,p}$ is as in \eqref{defuk}. As a consequence of Claim \ref{claimreste3} and of Claim \ref{claimreste1} we can then write that:
\ben \label{conclureste2}
S_k = O(\vek).
\een
Using successively Claim \ref{claimreste2}, \eqref{conclureste1}, Claim \ref{claimreste3} and Claim \ref{claimreste1} we therefore obtain, for $1 \le i \le n$:
\ben \label{conclureste3}
\bal
 \frac{\partial}{\partial t} \Big( J(u_{k,t,p}) \Big)(t, p) & = \frac{\partial}{\partial t} \Big( J(u_0 + W_{k,t,p}) \Big)(t, p) + o(\vek) \textrm{ and }\\
 \frac{\partial}{\partial p_i}  \Big( J(u_{k,t,p} ) \Big)(t, p) & =  \frac{\partial}{\partial p_i}  \Big( J(u_0 + W_{k,t,p} ) \Big)(t, p) + o(\vek).\\
\eal
\een
With \eqref{calcerr9} and \eqref{correctionerreur}, \eqref{conclureste3} concludes the proof of Proposition \ref{DLreste}. 
\end{proof}

\section{Conclusion of the proof of Theorem \ref{Th1}} \label{conclusion}

\noindent We conclude in this section the proof of Theorem \ref{Th1} by showing that the $\lambda_{i,k}$ as in \eqref{soloutnoyau} can be made to vanish for some values of $t$ and $p$. 

\medskip

\noindent Assume first that $7 \le n \le 9$ or that $(M,g)$ is locally conformally flat. Then by \eqref{defmk} we have $\mk = \vek^{\frac{2}{n-2}}$ and  Propositions \ref{propenergie} and \ref{propreste} show that there holds:
\ben \label{conclu1}
\frac{1}{\vek} \left( J \Big( u_0 + W_{k,t,p} + \phi_k(t,p) \Big) - J(u_0) - \frac{1}{n}K_n^{-n} \right) \to H(t,p)
\een
in $C^1_{loc} \left( (0,+\infty) \times \overline{B_0(1)} \right)$ as $k \to + \infty$, where we have let:
\ben \label{defH}
H(t,p) = - \frac{1}{2^*} \int_{\RR^n} \Psi(p+ty) \left( 1 + \frac{f(\xi_0)}{n(n-2)} |y|^2 \right)^{-n} dy 
- \alpha_n t^{\frac{n-2}{2}}. 
\een
In \eqref{defH} we have let $\alpha_n = (n-2)^{\frac{n}{2}} n^{\frac{n-2}{2}} $ and $\Psi$ is as in \eqref{defPsi}. For $t > 0$, straightforward calculations show that there holds
\ben \label{conclu2}
\bal
\partial_t H(t,0)  &= \frac{2}{2^*} t \int_{\RR^n} |y|^2 \beta(t^2 |y|^2) \left( 1 + \frac{f(\xi_0)}{n(n-2)} |y|^2 \right)^{-n}dy - \frac{n-2}{2} \alpha_n  t^{\frac{n-4}{2}} \\
& + \frac{2}{2^*} t^3 \int_{\RR^n} |y|^4 \beta'(t^2 |y|^2)  \left( 1 + \frac{f(\xi_0)}{n(n-2)} |y|^2 \right)^{-n}dy.
\eal
\een
We let $t_0 >0$ be defined by:
\ben \label{deft0}
 \frac{n-2}{2} \alpha_n  {t_0}^{\frac{n-6}{2}} = \frac{2}{2^*} \int_{\RR^n} |y|^2 \left( 1 + \frac{f(\xi_0)}{n(n-2)} |y|^2 \right)^{-n}dy.
\een
In \eqref{defPsi} we chose $\beta$ to be compactly supported in $[-M-1, M+1]$ and to be equal to $1$ in $[-M,M]$, for some positive $M$. With \eqref{conclu2} it is therefore not difficult to see that, up to choosing $M$ large enough, there exists $t_M > 0$ such that 
\ben \label{conclu3}
\partial_t H(t_M, 0 ) = 0,
\een 
and such that 
\ben \label{asymptotm}
t_M = t_0 + o_M(1)
\een
 as $M \to +\infty$, where $o_M(1)$ denotes some quantity that goes to zero as $M \to \infty$. We then have that:
\[ \partial_t^2 H(t_M,0) = - \frac{n-2}{2} \frac{n-6}{2} \alpha_n t_0^{\frac{n-6}{2}} + o_M(1),\]
so that up to choosing  a fixed $M$ large enough there holds:
\ben \label{conclu4}
\partial_t^2 H(t_M,0)  < 0.
\een
Let now $1 \le i,j \le n$. With \eqref{defH} there holds, for any $t >0$:
\ben \label{conclu5}
\bal
\partial_i H(t,0) & = - \frac{1}{2^*} \int_{\RR^n} \partial_i \Psi(ty) \left( 1 + \frac{f(\xi_0)}{n(n-2)} |y|^2 \right)^{-n}dy \\
 & = 0 \\
\eal
\een
since $\Psi$ in \eqref{defPsi} is radial. Using \eqref{defH} it is also easily seen that there holds:
\ben \label{conclu6}
 \partial_i \partial_j H(t_M,0) = \frac{2}{2^*} K_n^{-n}\delta_{ij} + o_M(1), 
 \een
where $K_n$ is as in \eqref{defKn}. Finally, using \eqref{conclu5} one gets that:
\ben \label{conclu7}
\partial_t \partial_i H(t_M,0) = 0.
\een
In the end, equations \eqref{conclu3} to \eqref{conclu7} show that, up to choosing $M$ large enough, $(t_M, 0)$ is a non-degenerate critical point of $H$. We claim now that this is enough to conclude the proof of Theorem \ref{Th1} when $7 \le n \le 9$ or $(M,g)$ is locally conformally flat. Define indeed, for $(t,p) \in (0,+\infty)\times \overline{B_0(1)}$ and for any $k$:
\ben \label{conclu8}
H_k(t,p) = \frac{1}{\vek} \left( J \Big( u_0 + W_{k,t,p} + \phi_k(t,p) \Big) - J(u_0) - \frac{1}{n}K_n^{-n} \right).
\een
Since $(t_M,0)$ is a nondegenerate critical point of $H$ in \eqref{defH}, and by \eqref{asymptotm}, there exists $\gamma >0$ such that $(t_M,0)$ is the only zero of $\nabla_{(t,p)}H$ in $[t_0 - \gamma, t_0 + \gamma] \times \overline{B_0(\gamma)}$. The $C^1$-convergence  \eqref{conclu1} shows then that for $k$ large enough and for any $s \in [0,1]$,
\[\textrm{deg} \left( \frac{\nabla_{(t,p)} \Big( (1-s)H_k + s H \Big)}{\left|  \nabla_{(t,p)}  \Big( (1-s)H_k + s H \Big) \right|_\xi}, [t_0 - \gamma, t_0 + \gamma] \times \overline{B_0(\gamma)} , 0 \right) \]
is well-defined and is non-zero by homotopy invariance. Therefore, for $k$ large enough, the mapping $H_k$ in \eqref{conclu8} possesses a critical point $(t_k, p_k) \in [t_0 - \gamma, t_0 + \gamma] \times \overline{B_0(\gamma)}$. By Proposition \ref{generalreduction}, the function defined by:
\ben \label{conclu9}
u_k = u_0 + W_{k,t_k,p_k} + \phi_k(t_k,p_k)
\een
is a critical point of $J$ in \eqref{defJ}. Standard elliptic arguments show then that $u_k$ is in $C^2(M)$, is positive in $M$ and, by the choice of $\eta$ right after Lemma \ref{minor}, that $u_k$ is a positive solution of \eqref{EL}.

To conclude the proof of Theorem \ref{Th1}, we show that the sequence $(u_k)_k$ blows up as $k \to + \infty$ and that for $k$ large enough all the $u_k$ are distinct. First, by the definition of $u_k$ as in \eqref{conclu9} and by the definition of $\phi_k$ as in Proposition \ref{generalreduction} we have, for fixed $R$ large enough:
\ben \label{conclu10}
 \liminf_{k \to + \infty} \Vert u_k \Vert_{L^{\frac{2n}{n-2}}(B_{\xi_k}(R \mk))} > 0,
 \een
where the $\xi_k$ are as in \eqref{defxik}. This shows that the sequence $(u_k)_k$ blows-up in the $C^0$ norm. Then, still by \eqref{conclu9}, by the definition of $W_{k,t,p}$ in \eqref{bulle} and by \eqref{condrk}, for any fixed $R > 0$ and for any integer $l > k$ there holds:
\[ \Vert u_k \Vert_{H^1(B_{\xi_l}(R \mu_l))} = o(1), \]
where $\lim_{k \to \infty} o(1) = 0$. With \eqref{conclu10}, this shows that for $k$ large enough all the $u_k$ are different and concludes the proof of Theorem \ref{Th1}.

\medskip

\noindent Assume then that  $(M,g)$ is not locally conformally flat. Propositions \ref{propenergie} and \ref{propreste} show that there holds:
\be 
\frac{1}{\vek} \left( J \Big( u_0 + W_{k,t,p} + \phi_k(t,p) \Big) - J(u_0) - \frac{1}{n}K_n^{-n} \right) \to H(t,p)
\ee
in $C^1_{loc} \left( (0,+\infty) \times \overline{B_0(1)} \right)$ as $k \to + \infty$, where we have let:
\begin{itemize}
\item If $n=10$:
\be 
\bal
H(t,p) & = - \frac{1}{2^*} \int_{\RR^n} \Psi(p+ty) \left( 1 + \frac{f(\xi_0)}{n(n-2)} |y|^2 \right)^{-n} dy \\
& - \left( \alpha_n + K_n^{-n}\frac{n(n-2)^2}{24(n-4)(n-6)} |W(\xi_0)|_g^2 \right) t^{4},
\eal
\ee
\item if $n \ge 11$:
\be 
\bal
H(t,p) & = - \frac{1}{2^*} \int_{\RR^n} \Psi(p+ty) \left( 1 + \frac{f(\xi_0)}{n(n-2)} |y|^2 \right)^{-n} dy \\
& - K_n^{-n}\frac{n(n-2)^2}{24(n-4)(n-6)} |W(\xi_0)|_g^2 t^{4}.
\eal
\ee
\end{itemize}
Here also we have let $\alpha_n = (n-2)^{\frac{n}{2}} n^{\frac{n-2}{2}} $. Then, a straightforward adaptation of the arguments developed when $7 \le n \le 9$ or $(M,g)$ is locally conformally flat allows to conclude the proof of Theorem \ref{Th1} in the same way in this case.

\section{Proof of Theorem \ref{Th2}} \label{negale6}

\noindent In this section we prove Theorem \ref{Th2}. Most of the computations in the $6$-dimensional case follow from straightforward adaptations of the arguments we developed for the $n \ge 7$ case, so we will only sketch the steps of the proof. We let $(M,g)$ be a closed $6$-dimensional Riemannian manifold of positive Yamabe type. Let $a_0 > 0$ be a smooth function in $M$. By standard sub- and super-solution arguments, there exists a unique solution $u_0$ of the following equation in $M$:
\be
\triangle_g u_0 + \frac{1}{5} S_g u_0 = - u_0^2 + \frac{a_0}{u_0^{4}}.
\ee 
As is easily checked, such a $u_0$ is a strictly stable solution of:
\ben \label{n61}
\triangle_g u_0 + h_0 u_0 = u_0^2 + \frac{a_0}{u_0^4},
\een
where we have let 
\ben \label{n62}
h_0 = \frac{1}{5} S_g + 2 u_0.
\een
We let $H$ be some compactly supported function in $\RR^n$ with $H(0) = 1$ and such that $0$ is a strict local maximum point of $H$. Let $(\vek)_k$ and $(\mk)_k$ be sequences of positive numbers converging to $0$. Define, for $x \in M$:
\ben \label{n63}
\bal
h(x) &= h_0(x) - \sum_{k \ge k_0} \vek H \left( \frac{1}{\mk} \left( \exk \right)^{-1}(x)  \right) \\
a(x) &= a_0(x) -  \sum_{k \ge k_0} \vek H \left( \frac{1}{\mk} \left( \exk \right)^{-1}(x)  \right) u_0^{5}(x),
\eal
\een
where 
\[ \xi_k = \exp_{\xi_0}^{g_{\xi_0}} \left( \left( \frac{1}{k}, 0, \cdots 0 \right)\right)\]
and $\xi_0 \in M$ is arbitrary. Then, for $k_0$ large enough, $\triangle_g +h$ is coercive, $a$ is positive and $u_0$ is again a smooth, positive, strictly stable solution of 
\ben \label{ELn6} 
\triangle_g u + hu = u^2 + \frac{a}{u^4}. 
\een
Let $(\delta_k)_k$ and $(r_k)_k$ be two sequences of positive numbers converging to $0$. Let, for $t > 0$ and $p \in \overline{B_0(1)}$:
\ben \label{defdkyk6}
\left \{ \bal
\delta_k(t) & = \delta_k t \\
y_k(p) & = \exk \left( \mk p \right) .\\
\eal \right.
\een
Define: 
\ben \label{bulle6}
W_{k,t,p}(x) = \Lambda_{y_k}(x) \chi \left( \frac{d_{g_{y_k}}(y_k,x)}{r_k}\right) \delta_k^{2} \left( \delta_k^2 + \frac{f(y_k)}{24}  d_{g_{y_k}}(y_k,x)^2 \right)^{-2},
\een
where $\Lambda$ is as in \eqref{propLambda} and $\delta_k$ and $y_k$ are given by \eqref{defdkyk6}. Assume in addition that:
\ben \label{condrk6}
\delta_k = o \left( \mk \right), \quad \mk = o \left( r_k^2\right) \quad \textrm{ and } \quad r_k = o \left( k^{-2}\right)
\een
as $k \to + \infty$. Let 
\ben \label{defJ6}
J(u) = \frac{1}{2} \int_M \left( |\nabla u|_g^2 + h u^2 \right) dv_g - \frac{1}{3} \int_M (u^+)^{3} dv_g + \frac{1}{3} \int_M \frac{a}{\eta(u)^{3}} dv_g
\een
be the energy functional associated to \eqref{ELn6}. Obviously, the analogous statement of Lemma \ref{minor} remains true in dimension $6$ and $\eta$ in \eqref{defJ6} is chosen so that $u_0$ in \eqref{ELn6} solves also:
\[ \triangle_g u_0 +  h u_0 = u_0^2 + \frac{a}{\eta(u_0)^4}. \]
The general finite-dimensional reduction theorem of Robert-V\'etois \cite{RobertVetois} applies again in this setting and, as in Proposition \ref{generalreduction}, yields the existence of a $C^1$ mapping $\phi_k: (0,M_k) \times \overline{B_0(1)} \to K_{k,t,p}^{\perp}$ such that $u_{k,t,p} = u_0 + W_{k,t,p} + \phi_k(t,p)$ solves  \eqref{ELn6} is and only if $(t,p)$ is a critical point of $(s,q) \mapsto J(u_{k,s,q})$. Since \eqref{propphik} holds again, straightforward computations show that there holds, in $C^0_{loc} \left( (0,+\infty) \times \overline{B_0(1)} \right)$:
\ben \label{n64}
\bal
J\Big(u_0 + W_{k,t,p} + \phi_k(t,p)\Big) -J(u_0) - \frac{1}{6} K_6^{-6} = - 5 H(p) \vek \delta_k(t)^2 \\
+ C_0 a_0(\xi_0) \delta_k(t)^3 + o \left( \delta_k(t)^3 \right) + o \left( \vek \delta_k(t)^2 \right) 
\eal
\een
as $k \to + \infty$, where $J$ is as in \eqref{defJ6} and $C_0$ is some positive constant depending on $u_0(\xi_0)$. Expansion \eqref{n64} follows from \eqref{condrk6} and from the same arguments developed to get \eqref{propener9}, \eqref{propener13} and \eqref{propener15}, \eqref{propener16} and \eqref{propener17}. Choose now $\delta_k = \vek$. Then with \eqref{n64} there holds:
\ben \label{n65}
\bal
\frac{1}{\vek} \left( J\Big(u_0 + W_{k,t,p} + \phi_k(t,p)\Big) -J(u_0) - \frac{1}{6} K_6^{-6} \right)  \\
\to - 5 H(p) t^2 + C_0 a_0(\xi_0) t^3
\eal
\een
in $C^0_{loc} \left( (0,+\infty) \times \overline{B_0(1)} \right)$, as $k \to + \infty$. By the choice of $H$ it is easily seen that the right-hand side in \eqref{n65}, as a function of $(t,p)$, has a strict local minimum at $(t_0, 0)$ where $t_0 = \frac{10}{3 a_0(\xi_0)C_0}$. For large enough $k$, the left-hand side in \eqref{n65} has then a critical point $(t_k, p_k) \in [t_0 - \gamma, t_0 + \gamma] \times \overline{B_0(\gamma)}$ for some $\gamma > 0$. Mimicking the arguments at the end of Section \ref{conclusion} and using Proposition \ref{generalreduction} this concludes the proof of Theorem \ref{Th2}. 

\bibliographystyle{amsplain}
\bibliography{biblio}

\end{document}